%
%
%

\documentclass[a4paper,11pt]{amsart}
\usepackage{amssymb}
\usepackage[a4paper,twoside,centering]{geometry}
\usepackage[arrow,matrix,curve]{xy}

\title{On cluster algebras from once punctured closed surfaces}
\author{Sefi Ladkani}
\address{%
Mathematical Institute of the University of Bonn \\
Endenicher Allee 60 \\
53115 Bonn, Germany}
\urladdr{http://www.math.uni-bonn.de/people/sefil}
\email{sefil@math.uni-bonn.de}

\thanks{The author is supported by DFG grant LA 2732/1-1 in the
framework of the priority program SPP 1388 ``Representation
theory''.}

\DeclareMathOperator{\add}{add}
\DeclareMathOperator{\Hom}{Hom}
\DeclareMathOperator{\ind}{ind}

\newcommand{\wt}{\widetilde}

\newcommand{\bQ}{\mathbb{Q}}
\newcommand{\bZ}{\mathbb{Z}}

\newcommand{\cA}{\mathcal{A}}
\newcommand{\cC}{\mathcal{C}}
\newcommand{\cP}{\mathcal{P}}
\newcommand{\cQ}{\mathcal{Q}}
\newcommand{\cS}{\mathcal{S}}
\newcommand{\cU}{\mathcal{U}}

\newtheorem*{theorem*}{Theorem}
\newtheorem{theorem}{Theorem}[section]
\newtheorem{prop}[theorem]{Proposition}
\newtheorem{lemma}[theorem]{Lemma}
\newtheorem{cor}[theorem]{Corollary}

\theoremstyle{definition}
\newtheorem{defn}[theorem]{Definition}
\newtheorem{remark}[theorem]{Remark}
\newtheorem{example}[theorem]{Example}

\numberwithin{equation}{section}

\begin{document}

\begin{abstract}
We show that many cluster-theoretic properties of the Markov quiver
hold also for adjacency quivers of triangulations of once-punctured
closed surfaces of arbitrary genus.

Along the way we consider the class $\cP$ of quivers introduced by
Kontsevich and Soibelman, characterize the mutation-finite
quivers that belong to that class and draw some conclusions
regarding non-degenerate potentials on them.
\end{abstract}

\maketitle

The markov quiver $Q$ shown below
\[
\xymatrix@=1pc{
& {\bullet} \ar@<-0.5ex>[ddl] \ar@<0.5ex>[ddl] \\ \\
{\bullet} \ar@<-0.5ex>[rr] \ar@<0.5ex>[rr]
&& {\bullet} \ar@<-0.5ex>[uul] \ar@<0.5ex>[uul]
}
\]
has many intriguing properties:

\begin{itemize}
\item
There are exactly two arrows starting and two arrows
ending at any vertex of $Q$;

\item
The cluster algebra $\cA(Q)$ is not Noetherian \cite{Muller13};

\item
The upper cluster algebra $\cU(Q)$ is not equal to the cluster algebra
$\cA(Q)$~\cite{BFZ05,Muller13};

\item
$Q$ has no maximal green mutation sequences \cite{BDP13};

\item
There is a non-degenerate potential on $Q$ defining a $\Hom$-finite
cluster category having cluster-tilting objects that are not reachable
from the canonical one~\cite{Plamondon13};

\item
$Q$ does not belong to the class $\cP$ introduced by
Kontsevich and Soibelman~\cite{KS08}.
\end{itemize}

The purpose of this note is to demonstrate that this quiver does not
comprise a unique,
singular, example, but rather it is the simplest member of an infinite family of
mutation classes whose quivers share similar properties to the above.

Indeed, recall that $Q$ arises as the adjacency quiver
(in the sense of~\cite{FST08}) of any triangulation of a torus with
one puncture. It is therefore natural to consider closed surfaces of
higher genus. Adjacency quivers of triangulations of closed surfaces with
one puncture were studied in our previous work~\cite{Ladkani11} solving
a combinatorial problem of classifying the mutation classes of quivers with
constant number of arrows.
In another work~\cite{Ladkani12} we considered algebraic aspects of
triangulations of closed surfaces (with arbitrary number of punctures).
By using our previous results and adapting the existing proofs in the
literature we can show the following result.
\begin{theorem*}
Let $Q$ be an adjacency quiver of a triangulation of a once-punctured closed
surface. Then the following assertions hold:
\begin{enumerate}
\renewcommand{\theenumi}{\alph{enumi}}
\item \label{it:T:deg}
There are exactly two arrows starting and two arrows
ending at any vertex of $Q$;

\item \label{it:T:A}
The cluster algebra $\cA(Q)$ is not Noetherian;

\item \label{it:T:UA}
The upper cluster algebra $\cU(Q)$ is not equal to the cluster algebra
$\cA(Q)$;

\item \label{it:T:green}
$Q$ has no maximal green mutation sequences;

\item \label{it:T:reach}
There is a non-degenerate potential on $Q$ defining a $\Hom$-finite
cluster category having cluster-tilting objects that are not reachable
from the canonical one;

\item \label{it:T:P}
$Q$ does not belong to the class $\cP$.
\end{enumerate}
\end{theorem*}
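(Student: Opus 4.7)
The plan is to treat all six assertions uniformly by combining the classification of once-punctured closed surface triangulation quivers from~\cite{Ladkani11}, the algebraic constructions of~\cite{Ladkani12}, and careful adaptations of the existing proofs for the Markov quiver. The common thread is that each listed property is mutation-invariant and depends only on structural features shared by every quiver in the mutation class.

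First I would handle (a), which is essentially recorded in~\cite{Ladkani11}: adjacency quivers of such triangulations are precisely the connected mutation-finite quivers in which every vertex has exactly two incoming and two outgoing arrows, and this 2-regularity is preserved under every flip. Once (a) is established, assertions (b) and (c) should be obtained by reading the proof of~\cite{Muller13} for the Markov quiver and verifying that its obstructions to Noetherianity and to $\cA(Q)=\cU(Q)$ are built from local combinatorial patterns present at every vertex of a 2-regular mutation-finite quiver, so that the same test elements lie in $\cU(Q)\setminus\cA(Q)$ and the same infinite ascending chains persist. A similar inspection of~\cite{BDP13} yields (d): 2-regularity together with the cyclic nature of the $3$-cycles around each vertex prevents any sequence of mutations from producing simultaneously positive $c$-vectors.

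For (e) I would invoke~\cite{Ladkani12} to produce a non-degenerate potential $W$ on $Q$ whose Jacobian algebra is finite-dimensional and whose associated cluster category $\cC_{(Q,W)}$ is $\Hom$-finite, and then follow Plamondon's~\cite{Plamondon13} strategy for constructing a rigid object outside the cluster-reachable component. I expect this step to be the main obstacle: in the Markov case Plamondon exhibits a specific non-reachable cluster-tilting object via explicit computation in the Jacobian algebra, and one must produce, canonically from the triangulation of a higher-genus surface, an analogous rigid indecomposable object together with a proof that its mutation orbit avoids the initial cluster-tilting object. Finally, (f) would follow either directly from~(d), via the observation that quivers in~$\cP$ must admit maximal green mutation sequences, or from~(e), using the characterization of~$\cP$ in terms of positivity/finiteness hypotheses that are obstructed by the existence of non-reachable cluster-tilting objects in $\cC_{(Q,W)}$.
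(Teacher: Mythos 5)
Your plan correctly identifies the sources and gets (a) and (b) essentially right (for (b) the operative point, which you should make explicit, is that 2-regularity forces every exchange relation to be homogeneous of degree $2$, so $\cA(Q)$ carries a non-negative grading with all cluster variables in degree $1$; Muller's argument then applies verbatim). But for the remaining parts the proposal stops at the point where the actual work begins, and in two places it leans on claims that are either unproved or false. For (c), ``the same test elements'' do not literally generalize from the Markov quiver; one must \emph{construct} an element of $\cU(Q)\setminus\cA(Q)$ for each genus. The paper's construction is the sum of angles $\measuredangle(\mathbf{x},T)=\sum \frac{x_i^2+x_j^2+x_k^2}{x_ix_jx_k}$ over the triangles of $T$: one proves it is invariant under flips (this uses exactly the 2-regularity at the flipped vertex), hence a Laurent polynomial in every cluster and so in $\cU(Q)$, while being homogeneous of degree $-1$ for the grading from (b) and therefore outside $\cA(Q)$. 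Without some such mutation-invariant of negative degree your argument for (c) has no content. For (e), you yourself flag the construction of a non-reachable cluster-tilting object as ``the main obstacle'' and leave it open; the resolution is that $\Sigma\Gamma$ works, detected by the observation that the sum of the coefficients of $\ind_U Y$ is preserved under mutation of $U$ precisely because every vertex has in-degree and out-degree $2$, and this sum equals $-n$ for $(\Gamma,\Sigma\Gamma)$ but $n$ for $(\Sigma\Gamma,\Sigma\Gamma)$.

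For (d), your proposed direct argument that 2-regularity ``prevents simultaneously positive $c$-vectors'' is not an argument until you exhibit the obstruction; the paper instead produces a non-degenerate potential $W_0$ (the sum of the $3$-cycles coming from the triangles, \emph{without} the cycle around the puncture) whose Jacobian algebra is infinite-dimensional, and invokes Proposition~8.1 of~\cite{BDP13}. For (f), the implication ``quivers in $\cP$ admit maximal green sequences'' that you want to use is not established anywhere and is delicate (existence of maximal green sequences is not even mutation-invariant in general), so deducing (f) from (d) this way is a gap. The paper proves (f) either combinatorially --- every quiver in the mutation class has the same number of arrows and admits an Eulerian cycle, hence is strongly connected and cannot be a triangular extension, so $Q\notin\langle Q^{<}\rangle\supseteq$ the relevant part of $\cP$ --- or algebraically, from the fact that any quiver in $\cP$ has a unique non-degenerate potential with finite-dimensional Jacobian algebra, contradicting the coexistence of $W_0$ (infinite-dimensional) and the Labardini potential $W_1$ (finite-dimensional). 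You should adopt one of these two routes rather than the one through maximal green sequences.
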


Therefore each genus $g \geq 1$ gives rise to a finite mutation class
consisting of quivers with $6g-3$ vertices that satisfy the properties 
in the theorem.

Parts~\eqref{it:T:deg} and~\eqref{it:T:A} are shown in Section~\ref{sec:comb},
where we also recall some basic properties of the quivers considered in the
theorem. In Section~\ref{sec:UA} we show part~\eqref{it:T:UA}
by explicitly constructing elements in the upper cluster algebra that do not
belong to the cluster algebra,
whereas in Section~\ref{sec:green} we show parts~\eqref{it:T:green}
and~\eqref{it:T:reach}. Some properties of the class $\cP$ of quivers
(e.g.\ uniqueness, rigidity and finite-dimensionality of non-degenerate
potentials on them, see Theorem~\ref{t:P}) are discussed in
Section~\ref{sec:P}, where part~\eqref{it:T:P} of the theorem is shown as
a corollary of a stronger statement (Proposition~\ref{p:Qminus}).
Along the way we characterize the quivers belonging to the class $\cP$
whose mutation class is finite (Theorem~\ref{t:Pfinite}), and as a
consequence deduce an alternative proof of some of the results
in~\cite{GLS13}.

Unlike the once-punctured torus where the mutation class consists of a single
quiver, for surfaces of higher genus the mutation classes become very large.
A formula for the size of these classes in terms of the genus is given in the
paper~\cite{BacherVdovina02}. Explicit members in each class were
provided in~\cite{Ladkani11} (see in particular Figure~1 and Section~3.2
there). In Section~\ref{sec:constr} we present another construction that
produces different explicit members. For genus $2$ and $3$ the resulting
quivers are shown in Figure~\ref{fig:Qg0}.

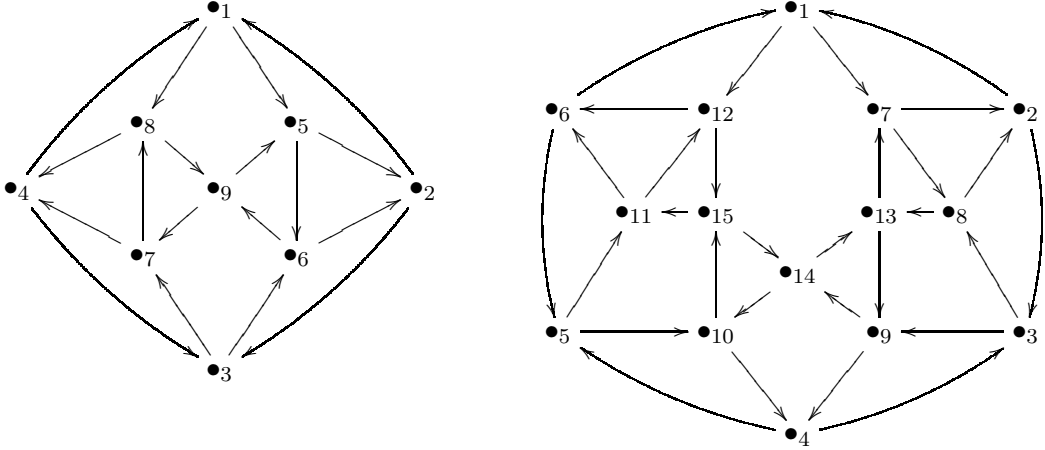
\begin{figure}
\begin{align*}
\xymatrix@=1pc{
&&& {\bullet_1} \ar[ddl] \ar[ddr] \\ \\
&& {\bullet_8} \ar[dll] \ar[dr]
&& {\bullet_5} \ar[dd] \ar[drr] \\
{\bullet_4} \ar@/^/[uuurrr] \ar@/_/[dddrrr]
&&& {\bullet_9} \ar[dl] \ar[ur]
&&& {\bullet_2} \ar@/_/[uuulll] \ar@/^/[dddlll] \\
&& {\bullet_7} \ar[uu] \ar[ull]
&& {\bullet_6} \ar[urr] \ar[ul] \\ \\
&&& {\bullet_3} \ar[uul] \ar[uur]
}
& \qquad &
\xymatrix@=0.8pc{
&&& {\bullet_1} \ar[ddl] \ar[ddr] \\ \\
{\bullet_6} \ar@/^/[uurrr] \ar@/_/[dddd]
&& {\bullet_{12}} \ar[ll] \ar[dd]
&& {\bullet_7} \ar[rr] \ar[ddr]
&& {\bullet_2} \ar@/_/[uulll] \ar@/^/[dddd] \\ \\
& {\bullet_{11}} \ar[uul] \ar[uur]
& {\bullet_{15}} \ar[l] \ar[dr]
&& {\bullet_{13}} \ar[dd] \ar[uu]
& {\bullet_8} \ar[l] \ar[uur] \\
&&& {\bullet_{14}} \ar[dl] \ar[ur] \\
{\bullet_5} \ar[uur] \ar[rr]
&& {\bullet_{10}} \ar[uu] \ar[ddr]
&& {\bullet_9} \ar[ul] \ar[ddl]
&& {\bullet_3} \ar[uul] \ar[ll] \\ \\
&&& {\bullet_4} \ar@/^/[uulll] \ar@/_/[uurrr]
}
\end{align*}
\caption{Adjacency quivers of triangulations of once-punctured
closed surfaces of genus $2$ (left) and genus $3$ (right).}
\label{fig:Qg0}
\end{figure}

\subsection*{Acknowledgement}
I would like to thank Arkady Berenstein for useful discussions.

\section{Combinatorial properties of the quivers}
\label{sec:comb}

In this section we explain parts~\eqref{it:T:deg} and~\eqref{it:T:A}
of the theorem.
For background on triangulations and their adjacency quivers we refer
to the paper~\cite{FST08} by Fomin, Shapiro and Thurston. In this
note we consider only ideal triangulations.
If $Q$ is a quiver, we denote by $Q_0$ its set of vertices and by $Q_1$
its set of arrows.

\begin{prop} \label{p:combmodel}
The set of the adjacency quivers of the (ideal) triangulations of a
closed surface of genus $g \geq 1$ with one puncture forms a mutation
class of quivers.
The following statements hold for any quiver $Q$ in this class:
\begin{enumerate}
\renewcommand{\theenumi}{\alph{enumi}}
\item \label{it:deg2}
$Q$ is connected without oriented cycles of length $2$, and
for any vertex $i$ of $Q$, there are exactly two arrows of $Q$ starting at $i$
and two arrows ending at~$i$.

\item
There are invertible maps $\phi, \psi \colon Q_1 \to Q_1$ with the following
properties:
\begin{enumerate}
\renewcommand{\theenumii}{\roman{enumii}}
\item
For any $\alpha \in Q_1$, the set $\{\phi(\alpha),\psi(\alpha)\}$ consists of
the two arrows starting at the vertex that $\alpha$ ends at;
\item
$\phi^3$ is the identity on $Q_1$.
\end{enumerate}

\item
For any arrow $\alpha \in Q_1$,
the path $\alpha \psi(\alpha) \psi^2(\alpha) \dots \psi^{12g-7}(\alpha)$ is an
Eulerian cycle in $Q$,
i.e.\ it is a cycle that traverses each arrow of $Q$ exactly once.
\end{enumerate}
\end{prop}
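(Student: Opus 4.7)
For part (a), the plan is to import the Fomin--Shapiro--Thurston framework from \cite{FST08}, where flips of ideal triangulations correspond to mutations of the signed adjacency quivers, so the set under consideration is automatically a mutation class, its members are connected (since the surface is) and $2$-cycle-free (by construction of the signed adjacency matrix). The degree count is purely local: because the surface is closed with no boundary, each arc is a side of exactly two triangles, and each triangle contributes one incoming and one outgoing arrow at each of its sides. Summing gives exactly two outgoing and two incoming arrows at every vertex.

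For part (b), I would define $\phi$ via the cyclic $3$-cycle of arrows attached to each triangle: if an ordinary triangle has sides $a,b,c$ in clockwise order, the arrows $a\to b$, $b\to c$, $c\to a$ form a $3$-cycle and $\phi$ sends each to the next, so $\phi^3=\mathrm{id}$ is automatic. By part (a) the head of any arrow $\alpha$ has exactly two outgoing arrows, one of which is $\phi(\alpha)$; set $\psi(\alpha)$ to be the other one. Both properties of $\phi$ and $\psi$ required in (b) are immediate from these definitions.

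The essential content is part (c). The plan is to interpret each arrow of $Q$ as a corner of a triangle incident to the puncture: since the surface has exactly one puncture, every arc-endpoint is at that puncture, so each triangle contributes three such corners, and the corner between the sides $a$ and $b$ (in clockwise order) corresponds precisely to the arrow $a\to b$. An Euler-characteristic computation gives $E=6g-3$ arcs and $T=4g-2$ triangles, hence $3T=12g-6$ corners in total; this matches $|Q_1|=2|Q_0|=12g-6$ from part (a), confirming the identification. Around the puncture the corners are arranged in a single cyclic order inherited from the local disk neighborhood. The main geometric step is then to verify that $\psi$ realizes the ``next corner'' operation in this cyclic order: passing from $\alpha$ to $\psi(\alpha)$ at the head vertex corresponds, by definition, to jumping to the other triangle attached at the relevant arc, and this is exactly what walking one step around the puncture does. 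Hence $\psi$ is a single $(12g-6)$-cycle on $Q_1$, the $\psi$-orbit of any arrow exhausts $Q_1$, and the path $\alpha\,\psi(\alpha)\cdots\psi^{12g-7}(\alpha)$ is the claimed Eulerian cycle.

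The main obstacle I anticipate is handling self-folded triangles rigorously, since these can occur in ideal triangulations of once-punctured closed surfaces. For such triangles the FST conventions treat the inner arc specially, and one needs a careful interpretation of corners at the puncture, of the definition of $\psi$, and of the local count in part (a) to keep the cyclic-order picture intact. Once this bookkeeping is done, the proof reduces to the local topological observations described above.
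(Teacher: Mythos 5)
Your outline for parts (b) and (c) --- $\phi$ cycling through the three arrows of each triangle, $\psi$ defined as the other outgoing arrow at the head, and the identification of the $12g-6$ arrows with the corners arranged in a single cyclic order around the unique puncture --- is essentially the content of the results in \cite[\S 2.1]{Ladkani12} that the paper simply cites, so on those points you are reconstructing the intended argument. The genuine problem is in your treatment of the preamble and of self-folded triangles, where you have the situation exactly backwards. In the Fomin--Shapiro--Thurston framework the set of adjacency quivers of the ideal triangulations of a fixed surface is \emph{not} ``automatically'' a mutation class: closure under mutation requires that every arc of every ideal triangulation can be flipped, and the radius of a self-folded triangle cannot be flipped inside the class of ideal triangulations (this is precisely why \cite{FST08} introduce tagged arcs). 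So the preamble needs an argument, and the argument is the observation you are missing: for a closed surface with exactly one puncture, \emph{no} ideal triangulation has a self-folded triangle, because a self-folded triangle requires a loop cutting out a monogon with a puncture in its interior, whereas here the unique marked point is the basepoint of every loop and hence never interior to such a monogon. This single observation is the substance of the paper's proof of the preamble; your closing paragraph instead proposes careful bookkeeping for self-folded triangles that ``can occur'', which both misstates the facts and, if taken seriously, would derail your whole strategy (the corner/cyclic-order picture and the local degree count in (a) do not survive self-folded triangles).

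A second, smaller gap: your degree count in (a) assumes that all $3T=12g-6$ triangle-arrows survive in $Q$, i.e.\ that no pair of them cancels when oriented $2$-cycles are deleted in forming the adjacency quiver. Two triangles sharing two sides can in principle contribute opposite arrows between the same pair of vertices, so you must rule out this configuration for once-punctured closed surfaces before concluding that every vertex has in- and out-degree exactly $2$ and that $|Q_1|=12g-6$; the same issue silently underlies your ``confirmation'' of the arrow--corner bijection in (c). The paper sidesteps all of this by citing \cite[\S 2.1]{Ladkani12}, where these combinatorial facts are established for arbitrary closed surfaces under a technical condition (T3) that is automatic when there is a single puncture.
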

\begin{proof}
Recall from~\cite{FST08} the compatibility between flips
of triangulations and mutations of their adjacency quivers.
Now the claim in the preamble follows from the fact that any two
triangulations can be connected by a sequence of flips,
together with the observation that for a once-punctured closed surface,
any triangulation has no self-folded triangles and hence can be flipped
at any arc.

The other statements are special case of our results in~\cite[\S2.1]{Ladkani12}
dealing with adjacency quivers of triangulations of arbitrary closed surfaces.
The functions $\phi, \psi$ are denoted there $f, g$ (we changed the notation
to avoid confusion with the genus $g$).
Note that the technical condition~(T3) on the triangulation needed there is
automatically satisfied when there is only one puncture,
see~\cite[Lemma~5.3]{Ladkani12}.
\end{proof}

\begin{remark}
The mutation classes considered in the proposition were denoted by $\cQ_{g,0}$
in our previous work~\cite{Ladkani11}. In fact, it follows from the
main results of~\cite{Ladkani11} that the property expressed in
part~\eqref{it:deg2} of the proposition actually
characterizes these mutation classes.

More precisely,
if $Q$ is a connected quiver without oriented $2$-cycles such that
for any quiver $Q'$ in its mutation class and any vertex $i$ of $Q'$
there are exactly two arrows of $Q'$ starting at $i$ and two arrows ending
at $i$,
then $Q$ arises as the adjacency quiver of a triangulation of a once-punctured
closed surface.
\end{remark}

Now let $Q$ be an adjacency quiver of a triangulation of a once-punctured
closed surface.
From part~\eqref{it:deg2} of Proposition~\ref{p:combmodel} we deduce that
the cluster
exchange relations in the cluster algebra $\cA(Q)$ are homogeneous of degree
$2$ in the cluster variables.
Hence we can repeat the argument of Muller~\cite[\S11.2]{Muller13} and
deduce the following statement.

\begin{prop} \label{p:grad}
Let $Q$ be an adjacency quiver of a triangulation of a closed surface 
with one puncture.
\begin{enumerate}
\renewcommand{\theenumi}{\alph{enumi}}
\item
$\cA(Q)$ admits a non-negative grading such that all cluster variables
have degree~$1$.

\item
$\cA(Q)$ is infinitely generated and non-Noetherian.
\end{enumerate}
\end{prop}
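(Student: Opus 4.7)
The plan is to follow Muller's argument from \cite[\S 11.2]{Muller13} essentially verbatim, with part~\eqref{it:deg2} of Proposition~\ref{p:combmodel} as the key combinatorial input.

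For part~(a) I would fix a seed and declare each of its cluster variables $x_1,\dots,x_n$ to have degree~$1$; this extends uniquely to a $\bZ$-grading on the ambient Laurent polynomial ring. The exchange relation at vertex~$i$ has the form $x_i\, x_i' = M^+ + M^-$, where $M^{\pm}$ are the two monomials whose exponents record, respectively, the arrows out of~$i$ and into~$i$. By Proposition~\ref{p:combmodel}\eqref{it:deg2} each of these monomials has total degree exactly~$2$, which forces the mutated variable $x_i'$ to lie in degree~$1$. Since part~\eqref{it:deg2} applies to \emph{every} quiver in the mutation class of~$Q$, the same reasoning applies at every subsequent seed; an induction on the length of the mutation sequence producing a given cluster variable then shows that every cluster variable of $\cA(Q)$ lies in degree~$1$. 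Because $\cA(Q)$ is generated as a $\bZ$-algebra by its cluster variables, the grading is automatically concentrated in non-negative degrees.

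For part~(b) I would first note that a once-punctured closed surface of genus $g\geq 1$ admits infinitely many isotopy classes of tagged arcs, so via the bijection of~\cite{FST08} the algebra $\cA(Q)$ contains infinitely many pairwise distinct cluster variables. Appealing to the linear independence of cluster monomials for cluster algebras from surfaces, these cluster variables are linearly independent over~$\bZ$, so the degree-$1$ component $\cA(Q)_1$ has infinite rank as an abelian group. Non-Noetherianness then follows from a standard graded argument: if $\cA(Q)$ were Noetherian its irrelevant ideal $\bigoplus_{d\geq 1}\cA(Q)_d$ would admit a finite set of homogeneous generators, but any generator of degree~$1$ contributes to $\cA(Q)_1$ only via multiplication by $\cA(Q)_0=\bZ$, so $\cA(Q)_1$ would have finite rank, a contradiction. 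The same reasoning shows that $\cA(Q)$ cannot be finitely generated as a $\bZ$-algebra. The step I expect to be the main obstacle is pinpointing a clean reference for the linear independence of the relevant cluster variables when $g\geq 2$: in the Markov case this can be handled by elementary means, but in higher genus one must invoke the general machinery for cluster algebras from tagged surfaces, for instance the skein-theoretic bases of Musiker--Schiffler--Williams or the categorification results used in~\cite{Muller13}.
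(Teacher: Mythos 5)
Your proposal is correct and follows essentially the same route as the paper, which likewise reduces everything to the degree-$2$ homogeneity of the exchange relations (part~\eqref{it:deg2} of Proposition~\ref{p:combmodel}, valid at every seed in the mutation class) and then repeats Muller's argument from~\cite[\S11.2]{Muller13}. The one step you flag as a potential obstacle --- linear independence of the infinitely many cluster variables, needed so that the degree-$1$ component has infinite rank --- is exactly the step the paper leaves implicit in its citation of Muller, and it is supplied by the known linear independence of cluster monomials for skew-symmetric (in particular surface-type) cluster algebras, so your argument goes through.
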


\section{On the upper cluster algebra}
\label{sec:UA}

In this section we prove part~\eqref{it:T:UA} of the theorem.
We follow an idea communicated to me by
Berenstein~\cite{BerensteinRetakh} on angles in order to explicitly
construct an element in the upper cluster algebra that does not belong
to the cluster algebra.

We start by recalling, in the language of ice quivers~\cite[\S4]{Keller12},
the notion of cluster algebra of geometric type with skew-symmetric exchange
matrix~\cite{BFZ05,FominZelevinsky02,FominZelevinsky07}.
An \emph{ice quiver} is a quiver $\wt{Q}$ without oriented cycles of
length $2$ (hence also without loops) together with a subset of 
vertices called \emph{frozen}. We shall assume that the set of
vertices is $\wt{Q}_0 = \{1,2,\dots,m\}$, and the subset of frozen ones
is $\{n+1,\dots,m\}$ for some $n \leq m$.
A \emph{seed} $(\mathbf{x},\wt{Q})$ is a pair consisting of an ice quiver
$\wt{Q}$ together with a tuple $\mathbf{x}=(x_1,x_2,\dots,x_m)$ of
elements in the field $\bQ(u_1,u_2,\dots,u_m)$ which freely generate it.

Let $1 \leq k \leq n$ (i.e.\ $k$ is not frozen).
The \emph{mutation} of the seed $(\mathbf{x},\wt{Q})$ at $k$ is the seed
$(\mathbf{x'},\wt{Q}')$ where $\wt{Q}'=\mu_k(\wt{Q})$ is the quiver
obtained from $\wt{Q}$ by mutation at $k$ and
$\mathbf{x'}=(x'_1,\dots,x'_m)$ is defined by setting $x'_i=x_i$ for
$i \neq k$ and
\begin{equation} \label{e:seedmut}
x'_k \cdot x_k =
\prod_{\substack{\text{arrows} \\ k \to i}} x_i
+ \prod_{\substack{\text{arrows} \\ j \to k}} x_j
\end{equation}
where the arrows are considered in the quiver $\wt{Q}$. The frozen vertices
of $\wt{Q}'$ are the same as those in $\wt{Q}$.

Let $\wt{Q}$ be an ice quiver and $\mathbf{x}=(x_1,\dots,x_m)$ a sequence
of $m$ indeterminates. The \emph{upper cluster algebra} $\cU(\wt{Q})$
consists of the elements in $\bQ(x_1,\dots,x_m)$ that can be written as
Laurent polynomials with integer coefficients
in the elements of any cluster $\mathbf{x'}$ appearing
in a seed $(\mathbf{x'},\wt{Q}')$ that can be obtained from the seed
$(\mathbf{x},\wt{Q})$ by a sequence of mutations at non-frozen vertices.
Define the algebra $\cA(\wt{Q})$ to be the $\bZ$-subalgebra of
$\bQ(x_1,\dots,x_m)$ generated by the elements of these clusters.
It is a cluster algebra of geometric type according to the definition
in~\cite[\S 5]{FominZelevinsky02},
however in later references~\cite{BFZ05,FominZelevinsky07} the
\emph{cluster algebra} is defined as
$\cA(\wt{Q})[x_{n+1}^{-1},\dots,x_m^{-1}]$. 
Obviously, the two definitions coincide when there are no frozen vertices
(``no coefficients'').

We work in a slightly more general setting than is actually needed.
Let $(S,M)$ be a marked bordered oriented surface and let $n$ be the
number of arcs in any triangulation of $(S,M)$.
If $S$ is not closed,
in this section it would be convenient to think of the boundary segments
of $(S,M)$, which are sides of triangles in any triangulation of $(S,M)$,
as arcs labeled $n+1,n+2,\dots,m$ for some $m > n$.

For a triangulation $T$ of $(S,M)$ denote by $\wt{Q}_T$ the \emph{extended
adjacency quiver} of $T$ defined similarly to the ordinary adjacency quiver
$Q_T$ but taking into account also the boundary segments.
We think of $\wt{Q}_T$ as an ice quiver on $m$ vertices
labeled $1,2,\dots,m$ where the vertices corresponding to the boundary
segments, labeled $n+1,\dots,m$, are frozen and the full subquiver on the
non-frozen vertices is $Q_T$.

By abuse of notation, a \emph{seed} $(\mathbf{x},T)$ is a pair consisting
of a triangulation $T$ of $(S,M)$
together with a tuple $x=(x_1,x_2,\dots,x_m)$ of elements in the field
$\bQ(u_1,u_2,\dots,u_m)$ which freely generate it.
Let $1 \leq k \leq n$ and assume that the arc labeled $k$ can be flipped.
The \emph{mutation} of the seed $(\mathbf{x},T)$ is the seed
$\mu_k(\mathbf{x},T)=(\mathbf{x'},\mu_k(T))$
where $\mu_k(T)$ is the triangulation obtained
from $T$ by flipping the arc labeled $k$ and
$\mathbf{x'}=(x'_1,\dots,x'_m)$ is defined according to the usual rule
of seed mutation~\eqref{e:seedmut},
where the arrows are considered in the quiver~$\wt{Q}_T$.

The next lemma shows that the compatibility between flips of triangulations
and mutations of their adjacency quivers~\cite{FST08} holds also for
extended adjacency quivers.
\begin{lemma} \label{l:flipext}
$\wt{Q}_{\mu_k(T)} = \mu_k(\wt{Q}_T)$, hence the map defined by
$(\mathbf{x},T) \mapsto (\mathbf{x},\wt{Q}_T)$ is compatible with seed
mutations.
\end{lemma}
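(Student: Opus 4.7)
The plan is to exploit the locality of both the flip and the mutation operations. Flipping the arc labeled $k$ changes $T$ only inside the quadrilateral formed by the two triangles $\Delta_1, \Delta_2$ that share $k$; all other triangles of $T$ are unaffected. Likewise, the mutation $\mu_k$ of $\wt{Q}_T$ only alters arrows which are either incident to $k$ or connect two vertices lying on a length-two path through $k$. So it suffices to verify the identity in a neighbourhood of $k$.

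Next I would recall that the construction of $\wt{Q}_T$ is a direct extension of that of $Q_T$: one uses the very same rule of contributing an oriented $3$-cycle on the three sides of each non-self-folded triangle (with the usual modification in the presence of self-folded triangles), except that now the sides are allowed to be boundary segments as well as interior arcs. In particular $Q_T$ is the induced subquiver of $\wt{Q}_T$ on the non-frozen vertices, and the arrows of $\wt{Q}_T$ incident to an interior arc are determined purely by the at most two triangles containing that arc.

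The core of the argument is then to observe that the proof in~\cite{FST08} of the identity $Q_{\mu_k(T)} = \mu_k(Q_T)$ is carried out by a case-by-case combinatorial inspection of the two triangles $\Delta_1, \Delta_2$ adjacent to the flipped arc $k$, and it never uses the fact that the remaining sides of $\Delta_1, \Delta_2$ are themselves interior arcs. Consequently the same verification applies verbatim when some of these sides are boundary segments, which establishes the first assertion. The second assertion follows immediately, since the exchange relation~\eqref{e:seedmut} only depends on the arrows of $\wt{Q}_T$ incident to $k$, and by the first assertion these agree with the corresponding arrows of $\wt{Q}_{\mu_k(T)}$ after the sign flip implicit in seed mutation.

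The only subtlety I would watch for is the configuration of self-folded triangles meeting the arc $k$, since there the definition of the adjacency quiver is modified. However, this is not the principal difficulty for the applications in this paper: for a once-punctured closed surface the technical condition (T3) of~\cite{Ladkani12} holds automatically and no self-folded triangle occurs, so the verification reduces to the cleanest case; and for general bordered marked surfaces the corresponding cases of the FST08 analysis transfer to the extended setting without change.
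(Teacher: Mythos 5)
Your argument is correct in substance, but it takes a genuinely different route from the paper. You prove the identity by locality: the flip and the mutation at $k$ both only affect the quadrilateral around $k$, and you assert that the case-by-case verification of $Q_{\mu_k(T)}=\mu_k(Q_T)$ in \cite{FST08} treats the four sides of that quadrilateral uniformly, so it goes through unchanged when some of them are boundary segments. That is true, but it requires re-inspecting the FST08 computation, since their signed adjacency matrix is indexed only by interior arcs and the entries involving boundary segments are literally absent from their statement; you are extrapolating that the same triangle-by-triangle bookkeeping covers the new rows and columns (including possible cancellation of $2$-cycles between two frozen vertices of the quadrilateral). The paper instead avoids any re-verification by a gluing trick: it attaches a small triangle with a new marked point along each boundary segment, producing a surface $(\wt{S},\wt{M})$ in which the former boundary segments become interior arcs, so that $\wt{Q}_T$ is literally the \emph{ordinary} adjacency quiver of the induced triangulation $\wt{T}$, and the lemma becomes a direct instance of the FST08 compatibility on the larger surface. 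Your approach is more self-contained and arguably more transparent about where the identity is actually checked; the paper's reduction is slicker, needs no case analysis at all, and the auxiliary surface $(\wt{S},\wt{M})$ is reused later (in the proof of Lemma~\ref{l:PQg1}) to recognize extended adjacency quivers as ordinary ones. Your closing caveat about self-folded triangles is appropriately handled: the applications in the paper all take place where none occur, and in any case the FST08 analysis accommodates them for ideal triangulations whenever the flip at $k$ is defined.
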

\begin{proof}
Consider the marked surface $(\wt{S},\wt{M})$ obtained from $(S,M)$ by gluing
to $S$ a small triangle near each boundary segment and adding as marked point
its inner vertex, see Figure~\ref{fig:SMtilde}.
Topologically, $S$ and $\wt{S}$ have the same genus and the same number of
boundary components, and the number of points in $\wt{M}$ on each boundary
component is twice as that of~$M$.
By construction, the arcs of
a triangulation $T$ of $(S,M)$ together with the boundary segments
naturally form a triangulation $\wt{T}$ of $(\wt{S},\wt{M})$, 
the map $T \to \wt{T}$ is compatible with flips of arcs of $T$,
and the extended adjacency quiver of $T$ is just the adjacency quiver of
$\wt{T}$.
\end{proof}

\begin{figure}
\begin{align*}
\xymatrix@=1pc{
{\cdot} \ar@{-}@<0.2ex>@/^/[rr] \ar@{.}@<-0.2ex>@/^/[rr]
\ar@{.}@<0.2ex>@/_/[dd] \ar@{-}@<-0.2ex>@/_/[dd]_{\gamma} 
&& {} \\ \\
{\cdot} \ar@{.}@<0.2ex>@/_/[rr] \ar@{-}@<-0.2ex>@/_/[rr]
&& {}
}
&&
\xymatrix@=1pc{
{\cdot} \ar@{-}@<0.2ex>@/^/[rr] \ar@{.}@<-0.2ex>@/^/[rr]
\ar@{-}@/_/[dd]_{\gamma}
&& {} \\
& {\cdot_p} \ar@{-}@/_/@<0.1pc>[ul] \ar@{.}@/_/@<-0.1pc>[ul]
\ar@{-}@<-0.1pc>@/^/[dl] \ar@{.}@<0.1pc>@/^/[dl] \\
{\cdot} \ar@{.}@<0.2ex>@/_/[rr] \ar@{-}@<-0.2ex>@/_/[rr]
&& {}
}
\end{align*}
\caption{Gluing a triangle and adding a marked point $p$ for a boundary
segment $\gamma$.}
\label{fig:SMtilde}
\end{figure}
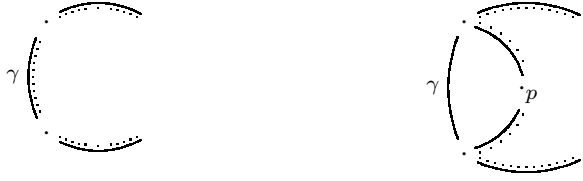

\begin{defn} \label{def:angles}
Let $(\mathbf{x},T)$ be a seed such that $T$ has no self-folded triangles.
Consider a triangle of $T$ with sides labeled $i,j,k$ 
and a marked point $p \in M$ as in the following picture
\[
\xymatrix@=1pc{
&& {\cdot} \ar@{-}@/^/[dd]^k \\
{{_p}\cdot} \ar@{-}@/^/[urr]^j \ar@{-}@/_/[drr]_i \\
&& {\cdot}
}
\]
We define the (commutative) \emph{angle} from $i$ to $j$ as
\begin{equation} \label{e:angle}
\measuredangle_p (i,j) =\frac{x_k}{x_i x_j}
\end{equation}
(in the notation we suppress the dependency of this quantity on the
seed $(\mathbf{x},T)$).

Let $p \in M$ be a marked point
and let $i_1,i_2,\dots,i_r$ be the arcs incident to $p$
arranged in a counterclockwise order. 
If $p$ lies on the boundary of $S$ then $i_1,i_r$
(which might coincide) are boundary
segments and the \emph{sum of angles at $p$} is defined as
\[
\measuredangle_p(\mathbf{x},T) =
\measuredangle_p (i_1,i_2) + 
\measuredangle_p (i_2,i_3) + \dots +
\measuredangle_p (i_{r-1},i_r) ,
\]
whereas if $p$ is a puncture, we define the sum of angles to be
\[
\measuredangle_p(\mathbf{x},T) =
\measuredangle_p (i_1,i_2) + 
\measuredangle_p (i_2,i_3) + \dots +
\measuredangle_p (i_{r-1},i_r) + \measuredangle_p(i_r,i_1) .
\]
\end{defn}

\begin{prop}[Preservation of angles] \label{p:angles}
Let $(\mathbf{x},T)$ be a seed such that $T$ has no self-folded triangles.
Assume that in the quiver $\wt{Q}_T$
there are exactly two arrows starting at the vertex $k$
and two arrows ending there.
Then $\mu_k(T)$ does not have self-folded triangles
and $\measuredangle_p(\mathbf{x},T) =
\measuredangle_p \mu_k(\mathbf{x},T)$ for any $p \in M$.
\end{prop}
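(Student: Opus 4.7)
The plan is to localise the argument to the quadrilateral formed by the two triangles of $T$ sharing the arc $k$. Since $T$ has no self-folded triangle and $k$ is not a boundary segment, it is the common side of two distinct triangles $\Delta_1, \Delta_2$. Label their remaining sides $a_1, a_2, a_3, a_4$ (some of which may be boundary segments of $S$) so that $\Delta_1$ has sides $\{k, a_1, a_2\}$ and $\Delta_2$ has sides $\{k, a_3, a_4\}$; with a suitable orientation convention, the two arrows of $\wt{Q}_T$ starting at $k$ point to $a_1, a_3$ and the two ending at $k$ come from $a_2, a_4$. Name the four vertices of the quadrilateral $A, C$ (the endpoints of $k$) and $B, D$ (the endpoints of $k'$), arranged so that $a_1 = AB$, $a_2 = BC$, $a_3 = CD$, $a_4 = DA$.

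For the first assertion, the only triangles of $\mu_k(T)$ not already present in $T$ are the two new triangles $\Delta'_1, \Delta'_2$ along $k'$, with sides $\{k', a_1, a_4\}$ and $\{k', a_2, a_3\}$. Since $k'$ is a newly introduced arc it is distinct from every $a_i$, so one of these triangles could be self-folded only if $a_1 = a_4$ or $a_2 = a_3$. But each such coincidence would pair an outgoing and an incoming arrow at $k$ into a $2$-cycle that cancels---for instance, if $a_1 = a_4$ then the arrow $k \to a_1$ coming from $\Delta_1$ is killed by the arrow $a_4 \to k$ coming from $\Delta_2$---leaving fewer than two arrows at $k$ in $\wt{Q}_T$ and contradicting the hypothesis.

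For the angle statement, the sum $\measuredangle_p$ can change only at the four vertices of the quadrilateral, because every other triangle and every other arc is unchanged. A direct computation from Definition~\ref{def:angles} gives
\begin{equation*}
\measuredangle_A(\mu_k(\mathbf{x},T)) - \measuredangle_A(\mathbf{x},T)
= \frac{x_{k'}}{x_{a_1}x_{a_4}} - \frac{x_{a_2}}{x_{a_1}x_k} - \frac{x_{a_3}}{x_{a_4}x_k}
= \frac{x_k x_{k'} - x_{a_1}x_{a_3} - x_{a_2}x_{a_4}}{x_{a_1}x_{a_4}x_k},
\end{equation*}
and entirely analogous formulas hold at $B, C, D$, differing from this one only in sign and cofactor but with the same numerator. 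All four numerators vanish by the exchange relation~\eqref{e:seedmut} at $k$, which in $\wt{Q}_T$ reads $x_k x_{k'} = x_{a_1}x_{a_3} + x_{a_2}x_{a_4}$.

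The main technical subtlety is that the vertices $A, B, C, D$ need not be distinct as marked points of $S$ (if $k$ is a loop, for example, one has $A = C$), and some of the sides $a_i$ may be identified on $S$ as well. Since each of the four local changes vanishes individually, however, any such identification merely combines already-vanishing contributions and does not affect the total change at any given marked point. The principal bookkeeping effort therefore lies only in matching the orientation convention used in Definition~\ref{def:angles} with the cyclic order of the arcs around each vertex of the quadrilateral.
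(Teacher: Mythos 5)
Your proof is correct and takes essentially the same route as the paper's: both localize to the quadrilateral of the flip, use the degree-two hypothesis at $k$ to rule out the side coincidences (your $a_1=a_4$, $a_2=a_3$) that would produce self-folded triangles or cancel arrows, and then observe that the exchange relation $x_kx_{k'}=x_{a_1}x_{a_3}+x_{a_2}x_{a_4}$ makes each local change in the angle sum vanish. The paper verifies the identity at a single corner and invokes symmetry, while you write it at an endpoint of $k$ and explicitly note that possible identifications of the quadrilateral's vertices and sides are harmless because each corner's contribution vanishes separately; this is a welcome extra bit of care but not a different argument.
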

\begin{proof}
It suffices to consider the contributions of angles inside the
quadrilateral where the flip of $k$ takes place. This quadrilateral looks
like the left picture below
\begin{align*}
\xymatrix@=1pc{
&& {\cdot} \ar@{-}[dd]^k \\
{{_p}\cdot} \ar@{-}@/^/[urr]^{j_1} \ar@{-}@/_/[drr]_{i_1}
&&&& {\cdot} \ar@{-}@/_/[ull]_{i_2} \ar@{-}@/^/[dll]^{j_2} \\
&& {\cdot}
}
&&
\xymatrix@=1pc{
&& {\cdot} \\
{{_p}\cdot} \ar@{-}@/^/[urr]^{j_1} \ar@{-}@/_/[drr]_{i_1}
\ar@{-}[rrrr]^{k}
&&&& {\cdot} \ar@{-}@/_/[ull]_{i_2} \ar@{-}@/^/[dll]^{j_2} \\
&& {\cdot}
}
\end{align*}
where the indices $i_1,i_2,j_1,j_2$ of the side arcs satisfy
$\{i_1,i_2\} \cap \{j_1,j_2\} = \emptyset$ due to our assumption on
the vertex $k$ in $\wt{Q}_T$.

The effect of flip is shown in the right picture, so that 
$\wt{Q}_{\mu_k(T)}$ is equal to the mutation $\mu_k(\wt{Q}_T)$,
in this quiver the in-degree and out-degree of the vertex $k$
are equal to $2$ and no self-folded triangles have been created in
$\mu_k(T)$.

By symmetry it is enough to consider 
the contribution of angles at a marked point $p$ as shown.
Indeed, dividing the cluster exchange relation
\[
x_k x'_k = x_{i_1} x_{i_2} + x_{j_1} x_{j_2}
\]
by $x_{i_1} x_{j_1} x'_k$ we deduce that
\[
\measuredangle_p(i,j) = 
\frac{x_k}{x_{i_1} x_{j_1}} = \frac{x_{i_2}}{x_{j_1} x'_k} +
\frac{x_{j_2}}{x_{i_1} x'_k}
= \measuredangle_p(k,j_1) + \measuredangle_p(i_1,k) .
\]
\end{proof}

Now let $(S,M)$ be a marked surface and assume that either:
\begin{itemize}
\item
All points of $M$ lie on the boundary of $S$ (``no punctures''); or
\item
$S$ is closed and $|M|=1$.
\end{itemize}
In this case, any triangulation of $(S,M)$ does not have self-folded
triangles and the in-degree and out-degree of any non-frozen vertex
$1 \leq k \leq n$ in its extended adjacency quiver are both equal to $2$.
This has several consequences for a triangulation $T$:
\begin{enumerate}
\renewcommand{\labelenumi}{\theenumi.}
\item
Proposition~\ref{p:angles} applies.

\item
If $\mathbf{x'}$ is a cluster in any (usual) seed $(\mathbf{x'},\wt{Q}')$
mutable from $(\mathbf{x},\wt{Q}_T)$ at non-frozen vertices, there
exists a triangulation $T'$ of $(S,M)$ such that $(\mathbf{x'},T')$
is a seed.

\item
The exchange relations~\eqref{e:seedmut}
are homogeneous of degree $2$, hence the algebra
$\cA(\wt{Q}_T)$ admits a non-negative grading such that the elements of
each cluster have degree $1$.
\end{enumerate}

\begin{prop} \label{p:mu}
Let $p \in M$ be a marked point and $(\mathbf{x},T)$ be any seed.
\begin{enumerate}
\renewcommand{\theenumi}{\alph{enumi}}
\item \label{it:muLaurent}
$\measuredangle_p(\mathbf{x},T)$ is a Laurent polynomial in $x_1,\dots,x_m$;

\item \label{it:muT}
$\measuredangle_p(\mathbf{x},T)$ is invariant under seed mutations;

\item \label{it:muU}
$\measuredangle_p(\mathbf{x},T) \in \cU(\wt{Q}_T)$;

\item \label{it:muA}
$\measuredangle_p(\mathbf{x},T)$ is homogeneous of degree $-1$, 
hence it does not belong to $\cA(\wt{Q}_T)$.
\end{enumerate}
\end{prop}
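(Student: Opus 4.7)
The plan is to derive the four parts in order, each from the preceding ingredients of the section. Part~\eqref{it:muLaurent} is immediate from Definition~\ref{def:angles}: each summand $x_k/(x_ix_j)$ is a Laurent monomial in $x_1,\dots,x_m$, so the finite sum defining $\measuredangle_p(\mathbf{x},T)$ is a Laurent polynomial.

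For part~\eqref{it:muT}, I will iterate Proposition~\ref{p:angles}. Observation~1 in the list preceding the present proposition guarantees that, in the two settings under consideration, every triangulation of $(S,M)$ has no self-folded triangles and every non-frozen vertex of its extended adjacency quiver has in-degree and out-degree both equal to~$2$. Hence the hypotheses of Proposition~\ref{p:angles} hold at every triangulation reached by iterated flips, and a straightforward induction on the length of the mutation sequence yields the desired invariance. Part~\eqref{it:muU} then follows by combining \eqref{it:muLaurent} and~\eqref{it:muT} with observation~2: any cluster $\mathbf{x}'$ reachable from $\mathbf{x}$ by mutations at non-frozen vertices appears in a seed of the form $(\mathbf{x}',T')$, so applying~\eqref{it:muT} and~\eqref{it:muLaurent} to $(\mathbf{x}',T')$ shows that $\measuredangle_p(\mathbf{x},T)$ is a Laurent polynomial in $\mathbf{x}'$. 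As this holds for every reachable cluster, $\measuredangle_p(\mathbf{x},T)\in\cU(\wt{Q}_T)$.

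For part~\eqref{it:muA}, observation~3 equips $\cA(\wt{Q}_T)$ with a non-negative grading in which every element of every cluster (frozen or not) has degree~$1$. Each Laurent monomial $x_k/(x_ix_j)$ is then homogeneous of degree $-1$, so $\measuredangle_p(\mathbf{x},T)$ is homogeneous of the same degree, whereas $\cA(\wt{Q}_T)$ is contained in the non-negative part of the grading. Since $\measuredangle_p(\mathbf{x},T)$ is a sum of non-zero Laurent monomials with positive coefficients, it does not vanish, and therefore cannot belong to $\cA(\wt{Q}_T)$. The only mild subtlety I foresee is in part~\eqref{it:muT}, namely verifying that the hypotheses of Proposition~\ref{p:angles} continue to hold at every intermediate seed along the mutation sequence rather than merely at the initial one; this is guaranteed uniformly by observation~1, so no additional work is required.
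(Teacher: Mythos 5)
Your proposal is correct and follows essentially the same route as the paper: part~\eqref{it:muLaurent} and~\eqref{it:muA} directly from Definition~\ref{def:angles} and the grading, part~\eqref{it:muT} by iterating Proposition~\ref{p:angles} (with the hypotheses maintained by the standing assumptions on $(S,M)$), and part~\eqref{it:muU} from the first two. The paper states this more tersely, but the content is identical.
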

\begin{proof}
Claims~\eqref{it:muLaurent} and~\eqref{it:muA}
are immediate from Definition~\ref{def:angles},
claim~\eqref{it:muT} follows from Proposition~\ref{p:angles} and
claim~\eqref{it:muU} follows from~\eqref{it:muLaurent} and~\eqref{it:muT}.
\end{proof}

By setting the variables $x_{n+1},\dots,x_m$ (corresponding to the boundary
segments) to~$1$, we obtain elements in the upper cluster algebra of
$\cU(Q_T)$ (without coefficients). We illustrate this in two examples.

\begin{example}
Consider a triangulation of a disc with $6$ marked points (i.e.\ a hexagon)
shown in the left picture.
Its extended adjacency quiver is shown in the right picture, where we
indicated the frozen vertices by $\square$.
On each arc we write the variable and
at each marked point we write the corresponding sum of angles,
which is a member of the upper cluster algebra $\cU(A_3)$.
\begin{align*}
\xymatrix@=0.5pc{
& {x_3} \ar@{-}[rr]^(0.35){1}
&& {\frac{(1+x_2)(x_1+x_3)}{x_1 x_2 x_3}} \ar@{-}[ddr]^(0.6){1}
\ar@{-}[dddd]^{x_1} \ar@{-}[ddddll]^{x_2} \ar@{-}[ddlll]^{x_3} \\ \\
{\frac{1+x_2}{x_3}} \ar@{-}[uur]^1 &&&& {x_1} \ar@{-}[ddl]^1 \\ \\
& {\frac{x_1+x_3}{x_2}} \ar@{-}[uul]^1 && {\frac{1+x_2}{x_1}} \ar@{-}[ll]^1
}
&&
\xymatrix@=0.3pc{
&& {\square} \ar[dd] \\
{\square} \ar[urr] \\
&& {\bullet} \ar[drr] \ar[ull] && && &&
{\square} \ar[dd] \\
&& && {\bullet} \ar[rr] \ar[dll]
&& {\bullet} \ar[urr] \ar[ddl] \\
&& {\square} \ar[uu] && && &&
{\square} \ar[ull] \\
&& && & {\square} \ar[uul]
}
\end{align*}
In this case the upper cluster algebra and the cluster algebra coincide
as follows from~\cite{BFZ05}.
\end{example}

\begin{figure}
\begin{align*}
\xymatrix@=0.5pc{
&&& {\bullet_3} \ar[ddlll] \ar[dddd] \\ \\
{\bullet_1} \ar@<0.5ex>[rr] \ar@<-0.5ex>[rr]
&& {\bullet_2} \ar[uur] \ar[ddr] \\ \\
&&& {\bullet_4} \ar[uulll]
}
&&
\xymatrix{
{\cdot} \ar@{-}[rr]^{x_1} \ar@{-}@/_1.8pc/[ddrr]^{x_4}
\ar@{-}@/^1.7pc/[ddrr]_{x_3} \ar@{-}@(r,d)[]^{1}
&& {\cdot} \ar@{-}[dd]^{x_2} \\ \\
{\cdot} \ar@{-}[uu]^{x_2} && {\cdot} \ar@{-}[ll]^{x_1}
}
\end{align*}
\caption{The quiver $Q_{1,1}$ together with a corresponding triangulation
of the torus with one boundary component and one marked point.}
\label{fig:torus}
\end{figure}
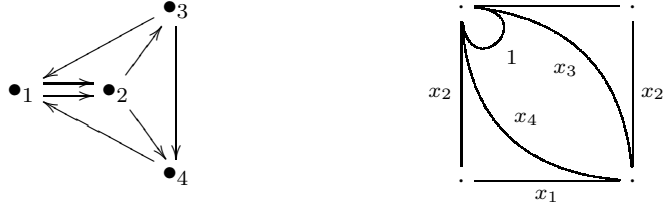

\begin{example} \label{ex:torus}
The quiver $Q_{1,1}$ shown in Figure~\ref{fig:torus}
is the adjacency quiver of any triangulation of the torus with one boundary
component and one marked point.
For the triangulation shown in Figure~\ref{fig:torus}, 
the element of Proposition~\ref{p:mu} (after specializing $x_5=1$) is given by
\[
\frac{x_1^2+x_2^2+x_3^2}{x_1 x_2 x_3} + \frac{x_1^2+x_2^2+x_4^2}{x_1 x_2 x_4}
+ \frac{1 + x_3^2 + x_4^2}{x_3 x_4} \in \cU(Q_{1,1}) .
\]
\end{example}

Now let $S$ be a closed surface and $|M|=1$.
In this case there are no frozen vertices (i.e.\ $\wt{Q}_T = Q_T$ for any
triangulation).
Since there is only one puncture, we may omit the reference to that puncture
and rewrite the sum of angles as
\begin{equation} \label{e:muT}
\measuredangle(\mathbf{x},T) = 
\sum_{\text{triangles $\{i,j,k\}$}}
\left(\frac{x_k}{x_i x_j} + \frac{x_i}{x_j x_k} + \frac{x_j}{x_k x_i}
\right)
= \sum \frac{x_i^2 + x_j^2 + x_k^2}{x_i x_j x_k} .
\end{equation}

\begin{cor}
Let $Q$ be an adjacency quiver of a triangulation of a once-punctured closed
surface. Then $\cA(Q) \neq \cU(Q)$.
\end{cor}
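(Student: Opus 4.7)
The plan is to combine the angle construction from Proposition~\ref{p:mu} with the non-negative grading on $\cA(Q)$ produced in Proposition~\ref{p:grad}: the sum of angles at the unique puncture will furnish a concrete element of $\cU(Q)$ of strictly negative degree, which therefore cannot lie in $\cA(Q)$.

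First I would fix any triangulation $T$ of the once-punctured closed surface whose adjacency quiver is $Q$. Because $S$ is closed and has a single marked point, there are no boundary segments, so $\wt{Q}_T = Q_T = Q$ has no frozen vertices; by Proposition~\ref{p:combmodel}\eqref{it:deg2} every vertex of $Q$ has in-degree and out-degree equal to $2$, and in particular $T$ has no self-folded triangles. Consequently the hypotheses of Propositions~\ref{p:angles} and~\ref{p:mu} are satisfied for $(\mathbf{x},T)$ and for every seed mutation-equivalent to it.

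Next I would consider the angle element $\measuredangle(\mathbf{x},T)$ at the puncture, given explicitly by formula~\eqref{e:muT}. By Proposition~\ref{p:mu}\eqref{it:muU} it lies in $\cU(Q)$, and by part~\eqref{it:muA} of the same proposition it is homogeneous of degree $-1$ in the grading that assigns degree $1$ to each initial variable $x_i$. It is nonzero: the Laurent monomials $x_i^2/(x_j x_k)$ indexed by the oriented sides of the triangles of $T$ are pairwise distinct, so no cancellation can occur in the sum~\eqref{e:muT}.

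Finally, I would invoke Proposition~\ref{p:grad}: $\cA(Q)$ carries a non-negative grading in which every cluster variable has degree $1$, so every element of $\cA(Q)$ is a $\bZ$-linear combination of homogeneous components of non-negative degree, and in particular $\cA(Q)$ contains no nonzero element of degree $-1$. Hence $\measuredangle(\mathbf{x},T) \in \cU(Q) \setminus \cA(Q)$, which yields $\cA(Q) \neq \cU(Q)$. I do not anticipate a real obstacle here, since the substantial work -- invariance of angles under flips, membership in the upper cluster algebra, and the existence of the grading -- has already been carried out in Propositions~\ref{p:angles}, \ref{p:mu} and~\ref{p:grad}; the corollary reduces to a degree comparison against this specific witness.
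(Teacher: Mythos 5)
Your proposal is correct and follows exactly the paper's route: the paper's proof of this corollary simply takes $\mu = \measuredangle(\mathbf{x},T)$ and cites Proposition~\ref{p:mu} for $\mu \in \cU(Q)$ and $\mu \notin \cA(Q)$, with the degree-$(-1)$ versus non-negative-grading comparison already packaged into part~\eqref{it:muA} of that proposition. Your additional remark that the Laurent monomials in~\eqref{e:muT} cannot cancel (so the witness is nonzero) is a worthwhile explicit check that the paper leaves implicit.
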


\begin{proof}
We adapt the proofs in~\cite{BFZ05,Muller13}.

Consider the sum $\mu = \measuredangle(\mathbf{x},T)$ of all the angles in
the triangles of a triangulation $T$ whose adjacency quiver is $Q$.
By Proposition~\ref{p:mu}, $\mu \in \cU(Q)$ but $\mu \not \in \cA(Q)$.
\end{proof}

\begin{remark}
In the case of the torus, the element $\mu$ is twice the element
$\frac{x^2+y^2+z^2}{xyz}$ considered in~\cite{Muller13}.
\end{remark}

\begin{example}
Consider the triangulation of a once-punctured surface of genus $2$ shown in
Figure~\ref{fig:Tg0}. Its adjacency quiver is shown in Figure~\ref{fig:Qg0}
and the element $\mu$ is
\[
\frac{x_1^2 + x_2^2 + x_5^2}{x_1 x_2 x_5} +
\frac{x_2^2 + x_3^2 + x_6^2}{x_2 x_3 x_6} +
\frac{x_3^2 + x_4^2 + x_7^2}{x_3 x_4 x_7} +
\frac{x_4^2 + x_1^2 + x_8^2}{x_4 x_1 x_8} +
\frac{x_5^2 + x_6^2 + x_9^2}{x_5 x_6 x_9} +
\frac{x_7^2 + x_8^2 + x_9^2}{x_7 x_8 x_9} .
\]
\end{example}

\section{On maximal green sequences}
\label{sec:green}

In this section we prove parts~\eqref{it:T:green} and~\eqref{it:T:reach}
of the theorem. We will not define maximal green mutation sequences
here, instead we refer the reader to the surveys~\cite{Keller11,Keller13}
by Keller and to the
paper~\cite{BDP13} by Br\"{u}stle, Dupont and P\'{e}rotin
initiating the study of such sequences.
For the basic notions on quivers with potentials that will be needed, we
refer to the paper~\cite{DWZ08} by Derksen, Weyman and Zelevinsky.

Throughout this section we fix a quiver $Q$ which is
an adjacency quiver of a triangulation of a once-punctured closed
surface of some genus $g \geq 1$. Let $\phi,\psi$ be as in
Proposition~\ref{p:combmodel}. Define an equivalence relation $\sim$ on the set
of arrows $Q_1$ by setting $\alpha \sim \alpha'$ if $\alpha'=\phi^r(\alpha)$
for some integer $r$
(in fact it suffices to consider $r \in \{0,1,2\}$ as $\phi^3$ is the
identity on $Q_1$).

Consider the following two potentials on $Q$:
\begin{align*}
W_0 &= \sum_{\alpha \in Q_1/\sim} \alpha \phi(\alpha) \phi^2(\alpha) \\
W_1 &= \sum_{\alpha \in Q_1/\sim} \alpha \phi(\alpha) \phi^2(\alpha)
     - \beta \psi(\beta) \psi^2(\beta) \dots \psi^{12g-7}(\beta)
\end{align*}
where the sum is taken over representatives of 
the $\sim$-equivalence classes of arrows and $\beta$ is any arrow.
Since $\phi^3$ is the identity on $Q_1$, taking other representatives
results in cyclically equivalent terms, hence these potentials are well
defined.

\begin{prop} \label{p:W0W1}
The potentials $W_0$ and $W_1$ are non-degenerate.
The Jacobian algebra $\cP(Q,W_0)$ is infinite-dimensional
whereas $\cP(Q,W_1)$ is finite-dimensional.
\end{prop}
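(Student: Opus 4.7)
The plan is to identify $W_1$ with the Labardini-Fragoso potential $W(T)$ associated to the triangulation $T$ of the once-punctured closed surface, and to derive the statements from the corresponding known results together with a short extra argument handling $W_0$. By Proposition~\ref{p:combmodel}, the $\phi$-orbits on $Q_1$ are exactly the $4g-2$ triangles of $T$, so $\sum_{\alpha \in Q_1/\sim}\alpha\phi(\alpha)\phi^2(\alpha)$ is the "triangle part" of $W(T)$; and since $\psi$ sends each arc-end meeting the unique puncture to the next one around it (each arc contributing two of the $12g-6$ arc-ends at the puncture), the Eulerian cycle $\beta\psi(\beta)\cdots\psi^{12g-7}(\beta)$ is precisely Labardini-Fragoso's "puncture cycle". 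Hence $W_1$ agrees, up to an overall sign, with $W(T)$; both its non-degeneracy and the finite-dimensionality of $\cP(Q,W_1)$ follow from the known results for surface potentials (recovering, for $g=1$, the non-degenerate potential with Hom-finite cluster category constructed in~\cite{Plamondon13}).

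For the non-degeneracy of $W_0$, the plan is a direct Derksen-Weyman-Zelevinsky mutation analysis. By Proposition~\ref{p:combmodel}\eqref{it:deg2}, every vertex of every quiver in the mutation class of $Q$ has in-degree and out-degree $2$, so mutation at a vertex $k$ produces at most $2 \cdot 2 = 4$ composite arrows and hence at most four length-$2$ cycles, each of which can be cancelled against a triangle term of $W_0$ during the standard reduction. Iterating along an arbitrary mutation sequence then shows that the reduced potential never contains a $2$-cycle, yielding non-degeneracy.

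For the dimension statements, compute cyclic derivatives. For $W_0$ one gets $\partial_\gamma W_0 = \phi(\gamma)\phi^2(\gamma)$, so the Jacobian ideal is the \emph{monomial} ideal generated by the length-$2$ $\phi$-paths. Now take the cycle $c = \beta\psi(\beta)\cdots\psi^{12g-7}(\beta)$: every length-$2$ sub-path of any power $c^n$ has the form $\psi^i(\gamma)\psi^{i+1}(\gamma)$, and since $\phi(\gamma)$ and $\psi(\gamma)$ are the \emph{two distinct} outgoing arrows at the endpoint of $\gamma$, no such sub-path coincides with any generator $\phi(\delta)\phi^2(\delta)$. Hence $c^n \neq 0$ in $\cP(Q,W_0)$ for every $n \geq 1$, so the algebra is infinite-dimensional. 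The main obstacle I expect is the careful book-keeping in the DWZ mutation analysis for $W_0$ (unlike $W_1$, it is not itself a surface potential, so the standard theorems do not apply directly) and the reconciliation of sign conventions between $W_1$ and Labardini-Fragoso's $W(T)$; the monomial argument for $\cP(Q,W_0)$ is by contrast entirely routine once the combinatorial data $(\phi,\psi)$ from Proposition~\ref{p:combmodel} is in place.
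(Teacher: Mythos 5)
Your treatment of $W_1$ matches the paper's: both identify it with the potential Labardini associates to the triangulation and quote the known results (the paper cites \cite{Labardini09} for non-degeneracy via flip/mutation compatibility and \cite{Ladkani12} for finite-dimensionality of the Jacobian algebra). Your argument for the infinite-dimensionality of $\cP(Q,W_0)$ is correct and in fact more self-contained than the paper's, which simply refers to \cite[\S4.3]{Ladkani11}: since $\partial_\gamma W_0 = \phi(\gamma)\phi^2(\gamma)$, the Jacobian ideal is the closed monomial ideal generated by the paths $\delta\phi(\delta)$, and the $\psi$-paths --- in particular all powers of the Eulerian cycle, which close up because $\psi^{12g-6}$ is the identity --- survive in the quotient.

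The genuine gap is in your non-degeneracy argument for $W_0$. Two points. First, the cancellation mechanism is subtler than ``each $2$-cycle cancels against a triangle term'': after mutating at $k$, only those composites $[ba]$ with $a,b$ lying in the \emph{same} triangle appear in the quadratic part of $[W_0]$; the cross-triangle composites do not, and when they close up into $2$-cycles (as already happens for the Markov quiver) one must verify that their partner arrows are among those deleted by the trivial-part splitting --- a case analysis of the local configurations of triangles, not an automatic count of $2\cdot 2=4$. Second, and more seriously, ``iterating along an arbitrary mutation sequence'' is unjustified as stated: after one mutation the potential is no longer $W_0$ but some reduced potential on $\mu_k(Q)$, and to iterate you must identify it. The actual content of \cite[\S4.3]{Ladkani11}, on which the paper's one-line proof rests, is precisely that $\mu_k(Q,W_0)$ is right-equivalent to $(\mu_k(Q),W_0')$ with $W_0'$ again the sum-of-triangles potential of the flipped triangulation. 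You flag this as ``the main obstacle,'' but it is the whole proof for $W_0$, not a deferrable detail: without it the induction has no hypothesis to propagate.
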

\begin{proof}
By~\cite[\S2.2]{Ladkani12} we know that $W_1$ is the potential associated
by Labardini~\cite{Labardini09} to a triangulation whose adjacency quiver
is $Q$. It consists of two summands; the first is the sum of all $3$-cycles
in $Q$ corresponding to the triangles, and the second is the cycle ``around''
the puncture.
We have shown the finite-dimensionality of its Jacobian algebra
in~\cite{Ladkani12}. The non-degeneracy of $W_1$ follows
by combining the fact that all triangulations do not have self-folded triangles
together with the compatibility between flips and mutations~\cite{Labardini09}.

The potential $W_0$, obtained from $W_1$ by omitting the
cycle around the puncture, was introduced in our previous
work~\cite[\S4.3]{Ladkani11}, where the relevant results are explained.
\end{proof}

We note that a similar statement appears in~\cite[Proposition~9.13]{GLS13}.

\begin{cor} \label{c:nogreen}
$Q$ has no maximal green sequences.
\end{cor}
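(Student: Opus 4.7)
The plan is to argue by contradiction using Proposition~\ref{p:W0W1}: if $Q$ admitted a maximal green mutation sequence, then every non-degenerate potential on $Q$ would have a finite-dimensional Jacobian algebra, contradicting the existence of the non-degenerate $W_0$ with $\dim \cP(Q,W_0) = \infty$.

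Concretely, I would invoke the categorical interpretation of maximal green sequences developed by Keller (surveyed in~\cite{Keller11,Keller13}) on the basis of~\cite{BDP13} and~\cite{DWZ08}. For any non-degenerate potential $W$ on $Q$, a maximal green sequence of the quiver lifts to a sequence of mutations of the canonical cluster-tilting object in the generalized cluster category $\cC_{Q,W}$ ending at its shift. Such a lift forces $\cC_{Q,W}$ to be $\Hom$-finite, which is in turn equivalent to $\cP(Q,W)$ being finite-dimensional. Applied to $W=W_0$, this directly contradicts the infinite-dimensionality statement of Proposition~\ref{p:W0W1}, yielding the corollary.

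The only delicate point — and the single non-trivial ingredient beyond Proposition~\ref{p:W0W1} — is to cite the theorem in the right form: namely that the existence of a maximal green sequence is a property of the quiver $Q$ alone and propagates to \emph{every} non-degenerate choice of potential on it, so that the mere existence of $W_0$ already obstructs any maximal green sequence. All the real work is already packaged in assembling the potentials $W_0$ and $W_1$; the corollary itself is essentially a one-line application of an existing result.
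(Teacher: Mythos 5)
Your argument is correct and is essentially identical to the paper's: the paper likewise combines Proposition~\ref{p:W0W1} (existence of the non-degenerate potential $W_0$ with infinite-dimensional Jacobian algebra) with Proposition~8.1 of~\cite{BDP13}, which states that a maximal green sequence forces the Jacobian algebra of any non-degenerate potential to be finite-dimensional. The extra categorical gloss you give on the mechanism behind that cited result is fine but not needed.
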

\begin{proof}
By the previous proposition, $Q$ has a non-degenerate potential whose
Jacobian algebra is infinite-dimensional. The result now follows
from Proposition~8.1 in~\cite{BDP13}.
\end{proof}

Let $\cC$ be the generalized cluster category associated to the quiver
with potential $(Q,W_1)$ whose Jacobian algebra
is finite-dimensional. By the results of Amiot~\cite{Amiot09},
it is a $\Hom$-finite $2$-Calabi-Yau triangulated category with
a canonical cluster-tilting object $\Gamma$ whose endomorphism algebra
is $\cP(Q,W_1)$. We denote by $\Sigma$ the suspension in $\cC$.

We can repeat the argument of Plamondon~\cite[Example~4.3]{Plamondon13}
to deduce that there are cluster-tilting objects in $\cC$ that are not
reachable from $\Gamma$ via finite sequences of mutations.

\begin{prop} \label{p:noreach}
The cluster-tilting object $\Sigma \Gamma$ in $\cC$ is not reachable by a
finite sequence of mutations from the canonical cluster-tilting object
$\Gamma$.
\end{prop}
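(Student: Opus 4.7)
The plan is to adapt verbatim the argument of Plamondon~\cite[Example~4.3]{Plamondon13}, which establishes the analogous statement for the Markov quiver (the $g=1$ case). Plamondon's argument depends on the input quiver $Q$ only through the existence of two non-degenerate potentials with very different Jacobian algebras: one finite-dimensional (to ensure that $\cC$ is $\Hom$-finite and the Amiot--Keller--Yang mutation framework applies) and one infinite-dimensional (to obstruct reachability). Since Proposition~\ref{p:W0W1} now supplies both $W_1$ and $W_0$ with these properties for every $g \geq 1$, the same scheme of proof should transfer.

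Concretely, I would suppose for contradiction that $\Sigma \Gamma = \mu_{i_k}\circ\cdots\circ\mu_{i_1}(\Gamma)$ in $\cC$. By the Keller--Yang correspondence between mutations of cluster-tilting objects in $\cC$ and QP mutations of $(Q,W_1)$, the sequence $\mu=\mu_{i_k}\circ\cdots\circ\mu_{i_1}$ lifts to a sequence of QP mutations whose end result has Jacobian algebra isomorphic to $\End_{\cC}(\Sigma\Gamma)^{op}$. Since the suspension $\Sigma$ is an autoequivalence of $\cC$, we have $\End_{\cC}(\Sigma\Gamma)\cong\End_{\cC}(\Gamma)=\cP(Q,W_1)$, so the underlying quiver of the mutated QP is again $Q$; hence $\mu$ represents a loop in the mutation class based at $Q$, and the mutated potential is right-equivalent to $W_1$.

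I would then apply the same mutation sequence $\mu$ to the alternative QP $(Q,W_0)$. Since QP mutation is functorial and preserves non-degeneracy, this produces a non-degenerate potential $W_0'$ on $Q$ with $\cP(Q,W_0')\cong\cP(Q,W_0)$, hence infinite-dimensional. The contradiction is extracted, following Plamondon, from the tight interplay between the two parallel mutation paths: the loop property forced by the first path must be compatible with the loop induced on the second, and this compatibility, combined with the fact that $W_0$ and $W_1$ differ only by the single cycle $\beta\psi(\beta)\cdots\psi^{12g-7}(\beta)$ around the puncture, rules out the simultaneous preservation (up to right-equivalence) of the finite-dimensional $\cP(Q,W_1)$ and the infinite-dimensional $\cP(Q,W_0)$.

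The main obstacle is precisely this last combinatorial--algebraic verification: one has to check that every step of Plamondon's argument for the Markov quiver uses only features of $Q$ that are captured by Propositions~\ref{p:combmodel} and~\ref{p:W0W1}, and not some accidental smallness of the once-punctured torus. Once this is done, the same chain of reasoning delivers the required contradiction and shows that $\Sigma \Gamma$ is not reachable from $\Gamma$.
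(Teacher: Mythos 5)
There is a genuine gap here, and it starts with a mischaracterization of the argument you are claiming to adapt. Plamondon's Example~4.3 (and the paper's proof) does not run on the pair of potentials $W_0$, $W_1$; it is an index (\emph{g}-vector) argument. One writes $\ind_U Y=\sum_i y_i[U_i]$ for a cluster-tilting object $U$ reachable from $\Gamma$ and uses the Dehy--Keller mutation rule for the coefficients $y_i$. Because every vertex of every quiver in the mutation class has in-degree and out-degree exactly $2$ (Proposition~\ref{p:combmodel}\eqref{it:deg2}), mutating at $k$ changes $\sum_{i\neq k}y_i$ by $2[y_k]_+-2[-y_k]_+=2y_k$ while $y_k$ becomes $-y_k$, so the total sum $\sum_i y_i$ is a mutation invariant. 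Evaluating this invariant on $Y=\Sigma\Gamma$ gives $-n$ at $U=\Gamma$ and $+n$ at $U=\Sigma\Gamma$, whence $\Sigma\Gamma$ is not reachable. The only input from the surface is the degree condition, not the existence of the two potentials; $W_0$ plays no role in this proposition (it is used for Corollary~\ref{c:nogreen} via~\cite[Proposition~8.1]{BDP13}).

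Beyond the misattribution, your outline never actually produces a contradiction. Supposing $\Sigma\Gamma$ reachable, you correctly note that the endpoint QP has underlying quiver $Q$ and Jacobian algebra isomorphic to $\cP(Q,W_1)$, and that running the same mutation sequence on $(Q,W_0)$ yields a non-degenerate potential on $Q$ with infinite-dimensional Jacobian algebra. But both conclusions are perfectly consistent with each other and with the hypothesis: nothing forces the two ``parallel paths'' to interact, since they live in the cluster categories of two different QPs, and isomorphism of Jacobian algebras does not even give right equivalence of the mutated potential with $W_1$. The step you defer as ``the main obstacle'' is in fact the entire proof, and I do not see how to complete it along these lines; you should instead use the index-sum invariance above.
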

\begin{proof}
The quiver with potential $(Q,W_1)$ is non-degenerate, hence in any iterated
mutation of quivers with potentials, the underlying quiver is the
iterated quiver mutation of $Q$. If follows from~\cite{KellerYang11} that
the quiver $Q_U$ of the endomorphism algebra of a cluster-tilting object
in $\cC$ obtained from the canonical one by iterated mutation 
does not have oriented cycles of length $2$ and it belongs to the mutation
class of $Q$.

We use the notion of index from~\cite{DehyKeller08} and
write the index of an object $Y$ in $\cC$ 
with respect to such a cluster-tilting object $U$ in $\cC$ as
$\ind_{U} Y = \sum_{i=1}^n y_i [U_i]$ in the
split Grothendieck
group of $\add U$, where $U_1, \dots, U_n$ are the
non-isomorphic indecomposable summands of $U$.
If $U'$ is the cluster-tilting object obtained from $U$ by 
exchanging the summand $U_k$ and
$\ind_{U'} Y = \sum_{i=1}^n y'_i [U'_i]$ is the corresponding index,
then by~\cite{DehyKeller08} the coefficients $y'_i$ are obtained from
$y_i$ according to the mutation rule
\[
y'_i = \begin{cases}
-y_k & \text{if $i=k$,} \\
y_i + r[y_k]_+
& \text{if $i \neq k$ and there are $r$ arrows $i \to k$ in $Q_U$,} \\
y_i - r[-y_k]_+
& \text{if $i \neq k$ and there are $r$ arrows $k \to i$ in $Q_U$}
\end{cases}
\]
where $[y]_+ = \max(0,y)$.
By part~\eqref{it:deg2} of Proposition~\ref{p:combmodel}, the
in-degree and out-degree of any vertex of $Q_U$ are equal to $2$
and hence
\[
y'_1 + \dots + y'_n 
= -y_k + \sum_{i \neq k} y_i + 2 \left([y_k]_+ - [-y_k]_+ \right)
= y_k + \sum_{i \neq k} y_i 
= y_1 + \dots + y_n
\]
so that the sum of the coefficients appearing in the index is
constant for all the cluster-tilting objects $U$ reachable from the
canonical one $\Gamma$.

Now write $\Gamma=\Gamma_1 \oplus \dots \oplus \Gamma_n$. Obviously
\begin{align*}
\ind_{\Gamma} \Sigma \Gamma = \sum_{i=1}^n (-1) \cdot [\Gamma_i]
&&
\ind_{\Sigma \Gamma} \Sigma \Gamma = \sum_{i=1}^n 1 \cdot [\Sigma \Gamma_i]
\end{align*}
so the corresponding sums of coefficients are $-n$ and $n$, respectively.
Hence $\Sigma \Gamma$ is not reachable from $\Gamma$ by a sequence of
mutations.
\end{proof}

\begin{remark}
Corollary~\ref{c:nogreen} could be also be deduced from
Proposition~\ref{p:noreach} as in the proof
of~\cite[Proposition~2.21]{BDP13}.
In fact, Proposition~\ref{p:noreach} implies the stronger statement that
$Q$ does not have a reddening sequence as defined in~\cite{Keller13}.
\end{remark}

\begin{remark}
Consider the graph of (isomorphism classes of basic) cluster-tilting objects
in $\cC$, where edges correspond to mutations.
In the course of the above proof we have seen that this graph has at least
two connected components, one containing $\Gamma$ and another containing
$\Sigma \Gamma$.
It is interesting to compare this with Proposition~7.10 of~\cite{FST08}
asserting that for a once-punctured closed surface, the tagged arc complex
and its dual graph have two connected components.
\end{remark}

\section{On the class $\cP$}
\label{sec:P}

We describe a procedure of inductively building quivers from simpler ones
while keeping various properties regarding the possible potentials on them.

\begin{defn}
A \emph{triangular extension}~\cite[\S3.3]{Amiot09} of two quivers
$Q'$ and $Q''$ is any quiver $Q$ obtained from the
disjoint union of $Q'$ and $Q''$ by adding some new arrows (possibly none)
from vertices of $Q'$ to vertices of $Q''$.
\end{defn}

\begin{defn}
Let $\cQ$ be a set of quivers. Denote by $\langle \cQ \rangle$ the
smallest set of quivers that contains $\cQ$ and is closed under performing
quiver mutations and triangular extensions.
\end{defn}

\begin{defn}
Let $\bullet$ be the quiver with one vertex and no arrows. 
The class $\cP$ of Kontsevich and Soibelman~\cite[\S8.4]{KS08} is
$\langle \bullet \rangle$.
In particular, all quivers that are mutation equivalent to ones
without oriented cycles are in $\cP$.
\end{defn}

\begin{remark} \label{rem:triang}
Let $Q$ be a triangular extension of two quivers $Q'$ and $Q''$.
If $W$ is a potential on $Q$, let $W|_{Q'}$ be the \emph{restriction}
of $W$ to $Q'$ consisting of all the terms of $W$ whose arrows lie
entirely in $Q'$. By abuse of notation we identify the complete path
algebra of $Q'$ with its image in the complete path algebra of $Q$,
so we can think of $W|_{Q'}$ as a potential on $Q'$ or on $Q$
as needed.
We define $W|_{Q''}$ similarly.

We observe the following:
\begin{enumerate}
\renewcommand{\theenumi}{\alph{enumi}}
\item
Any cycle in $Q$ is already a cycle contained in $Q'$ or in $Q''$.

\item
If $W$ is a potential on $Q$, then $W = W|_{Q'} + W|_{Q''}$.
Hence for any arrow $\alpha \in Q_1$,
\[
\partial_\alpha W = \begin{cases}
\partial_\alpha \left(W|_{Q'}\right)  & \text{if $\alpha \in Q'_1$,} \\
\partial_\alpha \left(W|_{Q''}\right) & \text{if $\alpha \in Q''_1$,} \\
0 & \text{otherwise.}
\end{cases}
\]

\item
If $W'$, $W''$ are rigid potentials on $Q'$, $Q''$ respectively,
then $W=W'+W''$ is a rigid potential on $Q$. Indeed, we have to verify
that any cycle in $Q$ is cyclically equivalent to an element in the
Jacobian ideal of $W$ and this follows from the previous claims
since $W'=W|_{Q'}$ and $W''=W|_{Q''}$.
\end{enumerate}
\end{remark}

\begin{prop}
Let $(\mathrm{P})$ be any of the following properties of a quiver $Q$:
\begin{enumerate}
\renewcommand{\theenumi}{P\arabic{enumi}}
\item \label{it:P:fd}
Any non-degenerate potential on $Q$ has finite-dimensional Jacobian algebra;

\item \label{it:P:rigid}
Any non-degenerate potential on $Q$ is rigid;

\item \label{it:P:uniq}
A non-degenerate potential on $Q$ is unique up to right equivalence;

\item \label{it:P:exrigid}
There is a rigid potential on $Q$;
\end{enumerate}
and let $\cQ$ be a set of quivers.
If each quiver in $\cQ$ has property $(\mathrm{P})$, then all quivers in
$\langle \cQ \rangle$ have property $(\mathrm{P})$.
\end{prop}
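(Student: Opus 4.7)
The plan is to proceed by induction on the construction of $\langle \cQ \rangle$: every quiver in it is obtained from $\cQ$ by finitely many quiver mutations and triangular extensions, so it suffices to verify that each of the properties (P1)--(P4) is preserved under these two operations.

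Preservation under mutation follows from the theory of quivers with potentials~\cite{DWZ08}. Mutation at a vertex $k$ induces a bijection between right-equivalence classes of reduced non-degenerate QPs on $Q$ and on $\mu_k(Q)$, and this bijection preserves both finite-dimensionality of the Jacobian algebra and rigidity; this handles (P1), (P2) and (P3). Since rigid QPs are automatically non-degenerate and mutation sends rigid QPs to rigid QPs, (P4) is also preserved.

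For triangular extensions, let $Q$ be a triangular extension of $R_1$ and $R_2$, both satisfying (P). By Remark~\ref{rem:triang}, every potential $W$ on $Q$ decomposes as $W = W|_{R_1} + W|_{R_2}$, with cyclic derivatives supported on each piece. Property (P4) is then immediate from Remark~\ref{rem:triang}(c) applied to rigid potentials on $R_1$ and $R_2$. For (P1)--(P3), the key lemma is that if $W$ is non-degenerate on $Q$, then each restriction $W|_{R_i}$ is non-degenerate on $R_i$. To see this, observe that for any $k \in R_1$, all arrows of $Q$ ending at $k$ lie in $R_1$, while arrows starting at $k$ are either in $R_1$ or are extension arrows to $R_2$. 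A direct inspection of the DWZ pre-mutation then shows that the full subquiver of $\mu_k(Q)$ on the vertices of $R_1$ coincides with $\mu_k(R_1)$ and carries $\mu_k(W|_{R_1})$ as its restricted potential; iterating, any $2$-cycle arising in a mutation of $(R_1, W|_{R_1})$ would already appear in the corresponding mutation of $(Q, W)$, contradicting non-degeneracy of $W$. The same argument (with arrows reversed) handles $R_2$.

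Granted the key lemma, (P2) follows from Remark~\ref{rem:triang}(c) applied to the two non-degenerate restrictions, and (P3) by combining right-equivalences on $R_1$ and $R_2$ (extended by the identity on the extension arrows, which never appear in any potential term). For (P1), letting $e_i$ denote the sum of idempotents at the vertices of $R_i$, the absence of paths from $R_2$ to $R_1$ yields a decomposition
\[
\cP(Q, W) = e_1 \cP(Q, W) e_1 \oplus e_1 \cP(Q, W) e_2 \oplus e_2 \cP(Q, W) e_2,
\]
where $e_i \cP(Q, W) e_i = \cP(R_i, W|_{R_i})$ is finite-dimensional by hypothesis, and the middle summand is a finitely generated bimodule over these algebras (via the finitely many extension arrows), hence finite-dimensional as well. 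The main obstacle I anticipate is the rigorous justification of the key lemma, which requires careful bookkeeping of how the DWZ pre-mutation and the subsequent reduction of trivial quadratic terms interact with the triangular decomposition.
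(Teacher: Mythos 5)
Your proposal follows the same overall route as the paper: reduce to showing that each of \eqref{it:P:fd}--\eqref{it:P:exrigid} is preserved under a single mutation and under a single triangular extension, handle mutations by the Derksen--Weyman--Zelevinsky correspondence, and handle triangular extensions via the decomposition $W = W|_{Q'}+W|_{Q''}$ of Remark~\ref{rem:triang} together with the key lemma that non-degeneracy passes to the two restrictions. The differences lie in how two ingredients are justified. For the key lemma the paper simply cites \cite[Corollary~22]{Labardini09}, whereas you attempt a direct proof, and this is where your argument as written has a gap: after one mutation at a vertex $k$ of $Q'$, the reversed copies $b^*$ of the extension arrows $b\colon k \to j$ point from $Q''$ back to $Q'$ and survive the reduction step (they lie in no $2$-cycle, since every arrow leaving $k$ in the premutated quiver ends in $Q'$), so $\mu_k(Q)$ is in general no longer a triangular extension of $\mu_k(Q')$ and $Q''$ and the word ``iterating'' does not literally apply. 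What does iterate is the stronger statement that for an arbitrary full subquiver $R$ on a subset of the vertices, restriction of a QP to $R$ commutes with mutation at vertices of $R$ up to right equivalence; that is precisely what Labardini proves, and with this reformulation your argument closes. For \eqref{it:P:fd} under triangular extensions the paper cites \cite[Prop.~3.7]{Amiot09}; your idempotent decomposition $\cP(Q,W)=e_1\cP(Q,W) e_1\oplus e_1\cP(Q,W) e_2\oplus e_2\cP(Q,W) e_2$ with $e_2\cP(Q,W) e_1=0$ is a correct direct proof of the same fact, since every path from $Q'$ to $Q''$ factors through exactly one extension arrow, making the off-diagonal bimodule finitely generated over the two finite-dimensional corner algebras. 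Everything else --- the DWZ facts for mutation, Remark~\ref{rem:triang} for \eqref{it:P:rigid} and \eqref{it:P:exrigid}, and gluing right equivalences along the identity on extension arrows for \eqref{it:P:uniq} --- coincides with the paper's proof.
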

\begin{proof}
We need to verify that each of the properties is preserved under
quiver mutations and triangular extensions.

First, we assume property $(\mathrm{P})$ for a quiver $Q$ and show it for
a mutation $\mu_k(Q)$. Let $W$ a non-degenerate potential on $\mu_k(Q)$.
Then the corresponding mutation of the quiver with potential $(\mu_k(Q),W)$
is $(Q, W')$ for some non-degenerate potential $W'$ on $Q$.

The following claims can be found in~\cite{DWZ08}.
\begin{enumerate}
\item[\protect{\eqref{it:P:fd}}]
The Jacobian algebra of $(\mu_k(Q),W)$ is finite-dimensional if that of
$(Q,W')$ is.

\item[\protect{\eqref{it:P:rigid}}]
$(\mu_k(Q),W)$ is rigid if $(Q,W')$ is.

\item[\protect{\eqref{it:P:uniq}}]
If $W_1, W_2$ are two non-degenerate potentials on $\mu_k(Q)$, then
the corresponding potentials $W'_1, W'_2$ on $Q$ are right equivalent
if and only if $W_1, W_2$ are.

\item[\protect{\eqref{it:P:exrigid}}]
If $(Q,W)$ is rigid, then its mutation is rigid and is of
the form $(\mu_k(Q),W')$ since rigid potentials are non-degenerate.
\end{enumerate}

Now let $Q$ be a triangular extension of $Q'$ and $Q''$ and assume that
property~$(\mathrm{P})$ holds for $Q'$ and $Q''$.
We show that it also holds for $Q$.
Let $W$ be a potential on $Q$. Then by Remark~\ref{rem:triang},
$W = W'+W''$ with $W'=W|_{Q'}$, $W''=W|_{Q''}$.
If $W$ is non-degenerate, then the potentials $W'$, $W''$ are also
non-degenerate~\cite[Corollary 22]{Labardini09}.

\begin{enumerate}
\item[\protect{\eqref{it:P:fd}}]
Follows from~\cite[Prop.~3.7]{Amiot09}.

\item[\protect{\eqref{it:P:rigid}}]
Follows from Remark~\ref{rem:triang}.

\item[\protect{\eqref{it:P:uniq}}]
If $W'_1, W'_2$ are right equivalent potentials on $Q'$
and $W''_1, W''_2$ are right equivalent potentials on $Q''$,
then $W'_1 + W''_1$ and $W'_2 + W''_2$ are right equivalent
potentials on $Q$. The right equivalence is obtained by
``gluing'' the right equivalences on $Q'$ and $Q''$,
defining it to be the identity on all the arrows from $Q'$ to $Q''$.

\item[\protect{\eqref{it:P:exrigid}}]
Follows from Remark~\ref{rem:triang}.
\end{enumerate}
\end{proof}

\begin{theorem} \label{t:P}
Any quiver in class $\cP$ has a 
unique non-degenerate potential (up to right equivalence) which is rigid
and its Jacobian algebra is finite-dimensional.
\end{theorem}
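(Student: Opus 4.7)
The plan is to reduce Theorem~\ref{t:P} to a direct application of the preceding proposition, taking the single quiver $\bullet$ as the generating set, since $\cP = \langle \bullet \rangle$ by definition. It suffices to verify that $\bullet$ satisfies each of the properties~\eqref{it:P:fd}, \eqref{it:P:rigid}, \eqref{it:P:uniq}, and~\eqref{it:P:exrigid}; propagation through mutations and triangular extensions is then handled by the previous proposition.

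First I would dispose of the base case. The quiver $\bullet$ has no arrows, so the only potential on it is the zero potential, and the complete path algebra reduces to the ground field $k$. Consequently the zero potential is trivially non-degenerate, trivially rigid (there is nothing in the Jacobian ideal to worry about), and its Jacobian algebra is $k$, which is finite-dimensional. Uniqueness up to right equivalence is immediate since zero is the unique potential. Thus all four properties~\eqref{it:P:fd}--\eqref{it:P:exrigid} hold for $\bullet$, and by the preceding proposition they hold for every quiver $Q \in \cP$.

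To assemble the theorem from these ingredients, I would proceed as follows. Property~\eqref{it:P:exrigid} furnishes a rigid potential $W$ on $Q$; rigid potentials are automatically non-degenerate, so $W$ is a non-degenerate potential on $Q$. (Alternatively one could invoke the existence of a non-degenerate potential as a general fact, due to Derksen--Weyman--Zelevinsky, under the standing hypothesis of an uncountable base field.) Property~\eqref{it:P:uniq} then guarantees that any non-degenerate potential on $Q$ is right equivalent to $W$, property~\eqref{it:P:rigid} that it is rigid, and property~\eqref{it:P:fd} that its Jacobian algebra is finite-dimensional.

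No substantive obstacle is anticipated: the entire content of the argument is already encoded in the preceding proposition, and the base case is essentially vacuous. The only point that merits a brief remark in the write-up is that one could drop property~\eqref{it:P:exrigid} from the chain by appealing directly to the general existence theorem for non-degenerate potentials, but the self-contained route via~\eqref{it:P:exrigid} is shorter and stays within the framework already developed in this section.
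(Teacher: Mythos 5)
Your proposal is correct and follows the same route as the paper: verify the four properties for the base quiver $\bullet$ (where the only potential is zero, so everything is trivial) and invoke the preceding proposition to propagate them through mutations and triangular extensions to all of $\cP = \langle \bullet \rangle$. The paper's own proof is just a one-line version of this, and your additional remarks on assembling the final statement from properties~\eqref{it:P:fd}--\eqref{it:P:exrigid} are consistent with what the paper leaves implicit.
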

\begin{proof}
The quiver $\bullet$ trivially satisfies the properties
\eqref{it:P:fd}, \eqref{it:P:uniq} and~\eqref{it:P:exrigid}, since the only
potential is zero.
\end{proof}

\begin{defn}
Define a partial order $<$ on the set of quivers as follows:
\[
\text{$Q' < Q$} \quad
\text{if $|Q'_0| < |Q_0|$, or $|Q'_0| = |Q_0|$ and $|Q'_1| < |Q_1|$}.
\]
For a quiver $Q$, let $Q^{<}$ be the set of all quivers smaller than $Q$
in the order just defined.
The quivers in $Q^{<}$ can be thought to be simpler than $Q$, as they
either have less vertices or the same number of vertices but less arrows.
\end{defn}

The next proposition shows that adjacency quivers of triangulations of
once-punctured closed surfaces cannot be built from simpler quivers
by using the operations defining the class $\cP$.

\begin{prop} \label{p:Qminus}
Let $Q$ be an adjacency quiver of a triangulation of a once-punctured closed
surface. Then $Q \not \in \langle Q^{<} \rangle$.
\end{prop}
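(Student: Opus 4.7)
The plan is to argue by contradiction: assume $Q \in \langle Q^{<} \rangle$, examine a finite construction of $Q$ from quivers in $Q^{<}$ using mutations and triangular extensions, and derive a contradiction with the in/out-degree condition of Proposition~\ref{p:combmodel}\eqref{it:deg2}, which holds throughout the mutation class of $Q$.

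First, by a straightforward induction on the construction tree (and after absorbing the trivial case of a triangular extension whose second operand is empty into the non-trivial branch), I would establish the following auxiliary claim: any quiver $P \in \langle Q^{<} \rangle$ with $|P_0| = |Q_0|$ is mutation equivalent either
\begin{enumerate}
\item[(i)] to a quiver of $Q^{<}$ with $|Q_0|$ vertices (hence with strictly fewer than $|Q_1|$ arrows), or
\item[(ii)] to a non-trivial triangular extension of two non-empty quivers, each with strictly fewer than $|Q_0|$ vertices.
\end{enumerate}
The induction is transparent because mutations preserve the vertex count while every non-trivial triangular extension strictly increases it, so the only ways to produce a quiver on $|Q_0|$ vertices are via a leaf of $Q^{<}$ on $|Q_0|$ vertices, via a mutation of such, or via a triangular extension of genuinely smaller pieces.

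Applied to $Q$ itself, both alternatives yield contradictions. In case (i), every quiver in the mutation class of $Q$ has exactly $2|Q_0| = |Q_1|$ arrows by the degree-$2$ condition, ruling out the strict inequality. In case (ii), let $T$ be the triangular extension of non-empty $P_1$ and $P_2$ mutation equivalent to $Q$. Since $T$ also has in-degree and out-degree $2$ at every vertex, a short count using the fact that all arrows between the components run from $P_1$ to $P_2$ forces the number of such arrows to vanish: each $v \in P_1$ has all of its incoming arrows inside $P_1$, so summing gives that $P_1$ contains exactly $2|P_{1,0}|$ arrows, which already exhausts the total out-degree $2|P_{1,0}|$ coming out of the vertices of $P_1$. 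Thus $T = P_1 \sqcup P_2$ is disconnected, contradicting the connectedness of the mutation class of $Q$.

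The main obstacle I foresee is the inductive claim itself: it requires some care to show that every quiver on $|Q_0|$ vertices appearing in $\langle Q^{<} \rangle$ is mutation equivalent to something of form (i) or (ii), since one has to trace mutations applied after triangular extensions back to an initial occurrence of (i) or to an initial triangular extension. Once the claim is in hand, the degree-counting argument and the appeal to connectedness are immediate.
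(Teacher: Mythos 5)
Your proof is correct, and its skeleton is the same as the paper's: reduce to the two cases (mutation-equivalent to a member of $Q^{<}$ on $|Q_0|$ vertices, or mutation-equivalent to a triangular extension of two smaller non-empty quivers), kill the first case by the constancy of the arrow count across the mutation class, and kill the second by a structural property from Proposition~\ref{p:combmodel}; the inductive ``auxiliary claim'' you state explicitly is left implicit in the paper's two-sentence proof but is exactly its content. Where you genuinely diverge is in the second case. The paper invokes the Eulerian cycle of Proposition~\ref{p:combmodel} to conclude that every quiver in the mutation class admits a directed path between any ordered pair of vertices, which is incompatible with the triangular-extension shape (there is no path from the second component back to the first). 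You instead use only the in/out-degree-$2$ condition of part~\eqref{it:deg2} together with connectedness: since no arrows enter $P_1$ from outside, summing in-degrees over $P_1$ shows that $P_1$ already contains $2|P_{1,0}|$ internal arrows, exhausting the total out-degree of its vertices, so there are no arrows from $P_1$ to $P_2$ and the extension is a disjoint union, contradicting connectedness. Your variant is marginally more elementary --- it needs neither the Eulerian cycle nor strong connectivity, and the flow-conservation count works for any quiver in which in-degree equals out-degree at every vertex --- while the paper's is more immediate once the Eulerian cycle is available. Both arguments are complete.
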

\begin{proof}
$Q$ is not mutation equivalent to any quiver in $Q^{<}$ since all members in
the mutation class of $Q$ have the same number of arrows~\cite{Ladkani11}.
Moreover, any mutation of
$Q$ is not a triangular extension of any two smaller quivers since it has
an Eulerian cycle (cf.\ Prop.~\ref{p:combmodel})
and hence there exists a path between any two vertices.
\end{proof}

As a corollary we obtain part~\eqref{it:T:P} of the theorem.
\begin{cor} \label{c:Qg0noP}
An adjacency quiver of a triangulation of a once-punctured closed surface
does not belong to class $\cP$.
\end{cor}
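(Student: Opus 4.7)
The plan is to read off the corollary as an immediate consequence of Proposition~\ref{p:Qminus}, using only the set-theoretic structure of the inductive definitions of $\cP$ and of $\langle Q^{<}\rangle$. The key observation is that $Q$ is a member of the mutation class in question, which lives on $6g-3$ vertices with $g\geq 1$, hence $|Q_0|\geq 3$. In particular the one-vertex quiver $\bullet$ satisfies $|\bullet_0|=1 < |Q_0|$, so $\bullet \in Q^{<}$.

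Given this, the remaining step is to compare the two set-closure constructions. Since $\langle Q^{<}\rangle$ is, by definition, a set of quivers containing $Q^{<}$ (and thus $\bullet$) which is closed under quiver mutations and triangular extensions, it is a candidate for the minimization defining $\langle \bullet\rangle$. By the minimality of $\langle\bullet\rangle$ among all such closed sets containing $\bullet$, we get the inclusion
\[
\cP \;=\; \langle\bullet\rangle \;\subseteq\; \langle Q^{<}\rangle.
\]
Now I would combine this inclusion with Proposition~\ref{p:Qminus}, which asserts $Q\notin \langle Q^{<}\rangle$, to deduce $Q\notin \cP$, as required.

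There is essentially no obstacle in the corollary itself: the genuine content has already been packaged into Proposition~\ref{p:Qminus}, where the two clauses to verify are (i)~that no quiver mutation-equivalent to $Q$ has fewer arrows (invoked via the constancy of the number of arrows in the mutation class, from~\cite{Ladkani11}), and (ii)~that no quiver mutation-equivalent to $Q$ decomposes as a triangular extension of two smaller quivers (ruled out by the Eulerian cycle in Proposition~\ref{p:combmodel}, which forces strong connectedness incompatible with the one-directional arrow pattern of a triangular extension). The step from that proposition to the corollary is purely formal, amounting to noticing that $\bullet$ itself is small enough to be in $Q^{<}$ for the surfaces considered here.
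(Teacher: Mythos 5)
Your proposal is correct and is exactly the deduction the paper intends: the corollary is stated as an immediate consequence of Proposition~\ref{p:Qminus}, and your explicit verification that $\bullet\in Q^{<}$ (since $|Q_0|=6g-3\geq 3$) together with the minimality of $\langle\bullet\rangle$ gives $\cP\subseteq\langle Q^{<}\rangle$, hence $Q\notin\cP$. This matches the paper's (implicit) argument; the paper additionally remarks that one could instead combine Theorem~\ref{t:P} with Proposition~\ref{p:W0W1}, but that is only offered as an alternative.
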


\begin{remark}
We could also deduce this corollary by combining Theorem~\ref{t:P} with
Proposition~\ref{p:W0W1}.
\end{remark}

There is no general procedure to determine if a given quiver belongs to the
class $\cP$. However, if the mutation class of the quiver is finite, a naive
algorithm would be to enumerate on the members of that class, search for
quivers which are triangular extensions of two smaller ones and
then apply the algorithm recursively for the smaller quivers.
The (connected) quivers whose mutation class is finite were classified
by Felikson, Shapiro and Tumarkin~\cite{FST12}. 
Apart from quivers with two vertices and some $r \geq 3$ arrows
from one vertex to the other, such quivers
either arise as adjacency
quivers of triangulations of marked surfaces, or they belong to $11$
exceptional mutation classes.
The next theorem shows that most of the quivers with
finite mutation class belong to the class $\cP$.

\begin{theorem} \label{t:Pfinite}
A connected quiver whose mutation class is finite belongs to the class $\cP$
if and only if it is \textbf{not} one of the following:
\begin{enumerate}
\renewcommand{\theenumi}{\alph{enumi}}
\item \label{it:T:closed}
An adjacency quiver of a triangulation of a closed surface; or

\item \label{it:T:Qg1}
The quiver $Q_{1,1}$ shown in Figure~\ref{fig:torus}; or

\item \label{it:T:X7}
A member of the mutation class of the quiver $X_7$.
\end{enumerate}
\end{theorem}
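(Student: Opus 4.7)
My plan is to prove the two implications separately, relying on the Felikson--Shapiro--Tumarkin classification recalled in the paragraph preceding the theorem. The forward direction (if $Q$ is in one of the three listed families then $Q\not\in\cP$) will use Theorem~\ref{t:P} as the obstruction: it suffices in each case to exhibit a non-degenerate potential whose Jacobian algebra is infinite-dimensional, or to exhibit two non-right-equivalent non-degenerate potentials. The backward direction will require producing, for every other finite-mutation class, a representative that is visibly built from $\bullet$ by the two operations defining $\langle\,\cdot\,\rangle$.

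For the forward direction I would handle the three families in turn. The case of closed surfaces with a single puncture is exactly Corollary~\ref{c:Qg0noP}. For closed surfaces with more than one puncture, I would upgrade the argument of Proposition~\ref{p:Qminus}: the number of arrows in any quiver of the mutation class is again constant (by the main results of~\cite{Ladkani11}), and any triangulation has no separating arc whose removal splits the adjacency quiver into a triangular extension, because every puncture forces a cyclic structure visible from $W_1$ of~\cite{Labardini09}. For $Q_{1,1}$ I would enumerate its mutation class directly (it is small and finite) and show that no member is a triangular extension; additionally, a cycle around the boundary point produces a non-degenerate potential with infinite-dimensional Jacobian algebra, contradicting~\eqref{it:P:fd}. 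For $X_7$ the quickest route is to cite the known existence of two distinct non-degenerate potentials on $X_7$ with different Jacobian behaviour (one finite-dimensional and one infinite-dimensional), which again violates Theorem~\ref{t:P}; these potentials are described in the literature on $X_7$ (and can be checked against \eqref{it:P:uniq} and \eqref{it:P:fd}).

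For the backward direction, the classification leaves three families to handle. The case of a quiver with two vertices and $r\geq 3$ arrows is immediate: such a quiver is a triangular extension of two copies of $\bullet$, hence in $\cP$. For each of the ten exceptional mutation classes other than $X_7$, namely those arising in~\cite{FST12}, I would pick an explicit representative in the class and exhibit it as a triangular extension of two strictly smaller quivers (each already in $\cP$ by a finite recursion, eventually terminating at $\bullet$); this is a finite and mechanical case check. The central case is that of an adjacency quiver $Q_T$ of a triangulation $T$ of a marked surface $(S,M)$ with non-empty boundary, $(S,M)$ not giving $Q_{1,1}$. Here I would argue by induction on the complexity of $(S,M)$: choose a triangulation $T$ containing a triangle with at least one boundary side, or an arc whose removal separates $(S,M)$ into simpler marked surfaces; the corresponding mutation or decomposition of $Q_T$ is then either mutation-equivalent to a triangular extension of adjacency quivers of smaller marked surfaces, or to a disjoint union (a trivial triangular extension). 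The inductive hypothesis then places each piece in $\cP$, and closure under triangular extensions and mutations finishes the case.

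The main obstacle will be the marked-surface case with non-empty boundary, and specifically proving that the reduction step is always available except when $(S,M)$ produces $Q_{1,1}$. The difficulty is topological: one must show that for every such $(S,M)$ (varying the genus, number of boundary components, marked points on the boundary, and number of punctures) one can choose an arc or a triangulation exhibiting the desired triangular-extension structure, and simultaneously verify that the single combinatorial obstruction to doing so is exactly $Q_{1,1}$. I expect this to require a careful enumeration of small cases (few marked points, low genus) together with a general ``ear-splitting'' or cutting argument for sufficiently large $(S,M)$. Once this is in place, combining with Theorem~\ref{t:P} also yields the alternative proof of the results of~\cite{GLS13} mentioned in the introduction.
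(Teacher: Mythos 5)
Your overall skeleton (run through the Felikson--Shapiro--Tumarkin classification, use Theorem~\ref{t:P} as the obstruction for the excluded families, and build everything else from $\bullet$ by triangular extensions and mutations) is the same as the paper's, but two steps that you defer or assert do not go through as written.

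First, for closed surfaces with $p\geq 2$ punctures your obstruction argument is wrong. The constant-number-of-arrows property you invoke is precisely what \emph{characterizes} the once-punctured closed case in~\cite{Ladkani11}; for $p\geq 2$ the triangulations may have self-folded triangles and the number of arrows varies over the mutation class, so the argument of Proposition~\ref{p:Qminus} does not ``upgrade''. Likewise ``every puncture forces a cyclic structure visible from $W_1$'' is not a proof that no quiver in the mutation class is a triangular extension. The paper instead argues algebraically: for every closed surface other than a sphere with $4$ or $5$ punctures, the Labardini potential is non-degenerate (\cite{Labardini12}) but not rigid (\cite{Ladkani12}), contradicting property~\eqref{it:P:rigid} of Theorem~\ref{t:P}; the two remaining spheres are handled by a finite computer check that every quiver in their mutation classes ($4$ and $26$ quivers) is strongly connected, hence never a triangular extension. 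Your $X_7$ step has a similar (smaller) problem: it rests on an unverified literature claim about two non-degenerate potentials with different Jacobian behaviour, whereas the paper just checks directly that no member of the two-element mutation class of $X_7$ is a triangular extension.

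Second, in the backward direction you correctly identify the marked-surface case as ``the main obstacle'' but you do not resolve its hardest instance, namely the classes $\cQ_{g,1}$ (genus $g>1$, one boundary component, a single marked point). Here every quiver in the mutation class has in- and out-degree bounded by $2$ at each vertex with $2n-1$ arrows on $n$ vertices, so there are no sinks or sources anywhere in the class and the sink-removal/cutting induction (which needs at least two marked points, as in Muller's argument and Lemma~\ref{l:cut}) is simply unavailable. The paper needs a separate construction (Lemma~\ref{l:PQg1}): glue a genus-$(g-1)$ piece to a one-holed torus to get a once-punctured closed surface, then cut out a disc near the puncture to insert a boundary component; the resulting triangulation has adjacency quiver equal to a triangular extension of two \emph{extended} adjacency quivers, each of which is an honest adjacency quiver of a surface with two marked points and hence in $\cP$ by the induction already established. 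Without an argument of this kind your induction does not close, and the ``single combinatorial obstruction is exactly $Q_{1,1}$'' claim remains unproved.
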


Applying Theorem~\ref{t:P}, we deduce the following.
\begin{cor} \label{c:Pfinite}
A quiver whose mutation class is finite and none of its connected
components belong to the above
families \eqref{it:T:closed}, \eqref{it:T:Qg1} or~\eqref{it:T:X7}
has a unique non-degenerate potential (up to right equivalence) which is
rigid and its Jacobian algebra is finite-dimensional.
\end{cor}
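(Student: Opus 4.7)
The plan is to reduce the corollary directly to Theorem~\ref{t:P} by first establishing that the given quiver lies in $\cP$. Since Theorem~\ref{t:Pfinite} is stated for connected quivers while the hypothesis of the corollary is phrased component-wise, the first step is to pass from the possibly disconnected $Q$ to its connected components.

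Write $Q = Q_1 \sqcup \dots \sqcup Q_r$ as a disjoint union of connected quivers. Because mutations act locally within a single connected component, the mutation class of $Q$ decomposes as a product of the mutation classes of the $Q_i$; in particular each $Q_i$ is connected with finite mutation class. By hypothesis no $Q_i$ belongs to any of the three excluded families, so Theorem~\ref{t:Pfinite} applies and yields $Q_i \in \cP$ for every $i$.

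Next I observe that the disjoint union of two quivers is itself a triangular extension (namely one with no connecting arrows added). Since $\cP = \langle \bullet \rangle$ is, by definition, closed under triangular extensions, iterating this observation gives $Q \in \cP$. Theorem~\ref{t:P} applied to $Q$ now delivers all the conclusions of the corollary at once: any non-degenerate potential on $Q$ is unique up to right equivalence, is rigid, and has finite-dimensional Jacobian algebra.

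The only matter requiring any care is the component-wise reduction, and it amounts to bookkeeping; the genuine content is already packaged in Theorems~\ref{t:Pfinite} and~\ref{t:P}. As a sanity check, one could alternatively bypass the closure of $\cP$ under disjoint unions and verify the four properties component by component, using that a potential on a disjoint union splits canonically as a sum of potentials on the individual components (every cycle is contained in a single component) and that the Jacobian algebra splits as the corresponding product, but the triangular extension route is cleaner and leverages machinery already in place.
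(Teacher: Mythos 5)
Your proposal is correct and follows essentially the same route as the paper, which deduces the corollary by combining Theorem~\ref{t:Pfinite} (applied componentwise) with Theorem~\ref{t:P}, using that disjoint unions are triangular extensions with no added arrows and hence that $\cP$ is closed under them. The paper leaves this reduction implicit, so your explicit bookkeeping is a faithful expansion rather than a different argument.
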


\begin{remark}
The uniqueness of potentials for these quivers
has also been shown by Geiss, Labardini and Schr\"{o}er
in~\cite{GLS13} using other techniques.
In that work the authors also show
the uniqueness, up to weak right equivalence, of potentials 
for many mutation classes in the family~\eqref{it:T:closed}
and construct two inequivalent potentials on $Q_{1,1}$.
\end{remark}

\begin{remark}
Corollary~\ref{c:Pfinite} does not give the potential explicitly.
An explicit construction for adjacency quivers of triangulations
has been given by Labardini~\cite{Labardini09}. For the exceptional
mutation classes that are not acyclic,
potentials can be found in our work~\cite{Ladkani11a}.
\end{remark}

The rest of this section is devoted to the proof of Theorem~\ref{t:Pfinite}.
We start by recording the following observation.
\begin{lemma} \label{l:onepoint}
If $Q$ has a sink $i$ and $Q \setminus \{i\}$ belongs to $\cP$, then
$Q$ belongs to $\cP$.
\end{lemma}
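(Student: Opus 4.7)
The plan is to realize $Q$ as a triangular extension of two quivers that are already known to lie in $\cP$, and then invoke the closure of $\cP$ under this operation (which is built into its definition).

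More precisely, first I would split $Q$ as a disjoint union of $Q \setminus \{i\}$ and the one-vertex quiver $\{\bullet\}$ consisting of the vertex $i$ alone (with no arrows attached). Because $i$ is a sink of $Q$, every arrow of $Q$ that is incident to $i$ has $i$ as its target; in particular, no arrow of $Q$ goes from $i$ into $Q \setminus \{i\}$. Therefore all arrows of $Q$ not present in $Q \setminus \{i\}$ run from vertices of $Q \setminus \{i\}$ into the vertex $i$, and $Q$ is by definition a triangular extension of $Q \setminus \{i\}$ and $\{\bullet\}$.

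Now $Q \setminus \{i\}$ is in $\cP$ by hypothesis, and the one-vertex quiver $\bullet$ lies in $\cP$ by the very definition of the class as $\langle \bullet \rangle$. Since $\cP$ is closed under triangular extensions, we conclude $Q \in \cP$.

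There is essentially no obstacle here; the only subtlety is the observation that a sink vertex guarantees the extra arrows all point the ``correct'' way to match the definition of a triangular extension of $Q \setminus \{i\}$ by $\{\bullet\}$, so the splitting is admissible and the closure property of $\cP$ applies directly.
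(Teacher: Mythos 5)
Your proof is correct and is precisely the intended argument: the paper records this lemma as an immediate observation without proof, and the observation is exactly that a sink $i$ makes $Q$ a triangular extension of $Q\setminus\{i\}$ by the one-point quiver, whence membership in $\cP$ follows from the closure properties in the definition of $\langle\bullet\rangle$.
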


Checking which of the $11$ exceptional finite mutation classes belongs to
$\cP$ is routine using Lemma~\ref{l:onepoint}
(or can be done on a computer). In particular,
the quivers $E_n$, $\wt{E}_n$, $E_n^{1,1}$ 
for $n=6,7,8$ and $X_6$ belong to $\cP$, but $X_7$ does not
(the latter two quivers were introduced in~\cite{DerksenOwen08}).

For adjacency quivers of triangulations with at least two marked points,
we adapt the arguments of Muller in~\cite[\S10]{Muller13}.
Denote by $\cS$ the set of marked surfaces $(S,M)$ which are not closed
and have at least two marked points.

\begin{lemma} \label{l:cut}
Let $(S,M) \in \cS$ and assume that it is not an unpunctured disc.
Then there exists a triangulation of $(S,M)$ whose adjacency quiver $Q$
has a sink $i$ and moreover the quiver $Q \setminus \{i\}$ is the
adjacency quiver of triangulation of a marked surface $(S',M') \in \cS$.
\end{lemma}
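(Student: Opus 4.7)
The plan is to exhibit a triangulation $T$ of $(S,M)$ together with an arc $k\in T$ that is a sink of the adjacency quiver $Q_T$, and such that $Q_T\setminus\{k\}$ is realized as the adjacency quiver of a triangulation of a suitable smaller marked surface $(S',M')\in\cS$.

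First I translate the sink condition into geometry. With the standard convention that each triangle of $T$ with sides $i,j,\ell$ listed in ccw order contributes the arrows $i\to j\to\ell\to i$ to $Q_T$ (any arrow incident to a boundary segment being discarded, since boundary segments are not vertices of $Q_T$), an arc $k$ is a sink of $Q_T$ if and only if, at each of its two endpoints, the edge immediately ccw from $k$ in the fan of edges at that endpoint is a boundary segment of $(S,M)$.

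In the main case, I assume that some boundary component of $S$ carries at least four marked points. Pick four consecutive such points $s,p,r,q$ in ccw order along the component, so the boundary reads $\ldots,s,p,r,q,\ldots$. I choose a triangulation $T$ containing both of the arcs $k=pq$ and $\ell=sq$: the first, together with the boundary segments $pr$ and $rq$, bounds an ear triangle $T_1=\{k,pr,rq\}$ with tip $r$; the second, together with $k$ and the boundary segment $sp$, bounds a second triangle $T_2=\{k,sp,\ell\}$ on the opposite side of $k$ (with third vertex $s$). Since $k$ and $\ell$ are compatible, any such pair extends to a full triangulation. A direct ccw inspection at $p$ and $q$ shows that the edge immediately ccw from $k$ is $sp$ at $p$ and $rq$ at $q$, both boundary segments, so $k$ is a sink in $Q_T$. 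I take $(S',M')$ to be obtained from $(S,M)$ by cutting off the ear $T_1$: topologically $S'=S$, the marked set becomes $M'=M\setminus\{r\}$, and the two boundary segments $pr,rq$ are merged into the single boundary segment inherited from $k$. Then $T'=T\setminus\{k\}$ is a triangulation of $(S',M')$; because $k$ was a sink of $Q_T$ whose only arrow $\ell\to k$ came from $T_2$, and because $T_2$ becomes an ear in $T'$ (contributing no arrows), one has $Q_{T'}=Q_T\setminus\{k\}$. Finally, $(S',M')\in\cS$ since $S'$ still has nonempty boundary and $|M'|=|M|-1\ge 3$.

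The main obstacle is to treat the remaining cases, in which no boundary component has four marked points. This covers many small configurations: components with two or three marked points, once-punctured or more highly-punctured small surfaces, and annuli with few boundary points. For each such case one must replace the above construction by an ad hoc one---for instance, on the annulus with a single marked point on each boundary component, the two parallel arcs from the outer to the inner marked point give a triangulation whose adjacency quiver has two vertices and no arrows (after the 2-cycle cancellation), and removing either arc corresponds topologically to cutting the annulus along it, yielding the one-vertex adjacency quiver of a quadrilateral in $\cS$. Similar case-by-case constructions, using loop arcs, tagged arcs near punctures, or arcs across different boundary components, handle the other configurations. I expect this exhaustive case analysis, rather than the main ear construction, to be the technical heart of the proof.
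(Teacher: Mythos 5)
Your ear-cutting construction in the main case is essentially correct as far as it goes, but it only reduces the number of marked points on a boundary component while leaving the genus, the number of punctures and the number of boundary components untouched. Consequently the ``remaining cases'' you defer to an ad hoc analysis are not a finite list of small configurations: they comprise \emph{every} topological type in which each boundary component carries at most three marked points, i.e.\ infinitely many surfaces of arbitrary genus with arbitrarily many punctures and boundary components. A complete proof must contain a mechanism for reducing these three topological invariants, and that mechanism is exactly what is missing from your argument. The paper (following the proof of Theorem~10.6 in Muller's work) supplies it with three cuts along a single arc $i$ arranged to be a sink: if there is a puncture, cut along an arc joining it to a boundary marked point (the puncture unfolds into two new boundary marked points); if there are at least two boundary components, cut along an arc joining marked points on distinct components (merging them into one); if there is a single boundary component with at least two marked points and positive genus, cut along a non-separating arc between distinct boundary points (dropping the genus). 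Since a surface in $\cS$ with no punctures, one boundary component and genus zero is an unpunctured disc, these three cases are exhaustive, each produces a surface still in $\cS$, and each strictly decreases the number of arcs, so the induction in Lemma~\ref{l:SinP} terminates.

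A smaller point: your sample ``remaining case'', the annulus with one marked point on each boundary component, is misdescribed. Its two arcs lie in two triangles, each of which contributes an arrow \emph{in the same direction}, so the adjacency quiver is the Kronecker quiver $\bullet\rightrightarrows\bullet$, not a two-vertex quiver with no arrows; no $2$-cycle cancellation occurs. (The lemma still holds there, since the target of the double arrow is a sink and deleting it leaves the one-vertex quiver of a quadrilateral, but the slip suggests that the case analysis you are relying on has not actually been carried out.)
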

\begin{proof}
We follow the proof of Theorem~10.6 in~\cite{Muller13}. There are
three reduction cases. In each case we find 
an arc $i$ and a triangulation of $(S,M)$ containing it
such that in its adjacency quiver $Q$ the corresponding
vertex $i$ is a sink and
$Q \setminus \{i\}$ is the adjacency quiver of
a triangulation of the marked surface $(S',M')$ obtained by
cutting along the arc $i$, which still belongs to $\cS$.
We demonstrate this in Figure~\ref{fig:reduce}.

If $(S,M)$ has a puncture $p$, then there are arcs $i, j, k$ 
as in picture~(1a)
(it is possible that for the marked points on the boundary, $q=q'$).
The surface $(S',M')$ is shown in picture~(1b).

If $(S,M)$ has at least two boundary components, then there is an arc $i$
connecting marked points on distinct boundary components. Find arcs $j, k$
as in picture~(2a) (it is possible that these arcs coincide).
The surface $(S',M')$ is shown in picture~(2b).

If $(S,M)$ has one boundary component with at least two marked points
and the genus of $S$ is not zero, then there is some arc $i$ which connects
distinct marked points on the boundary component of $S$ such that cutting
along $i$ does not disconnect $S$. We proceed as in the previous case.
\end{proof}

\begin{figure}
\[
\begin{array}{ccc}
\begin{array}{c}
\xymatrix@=0.5pc{
\\
&&&&& {\times_p} \ar@{-}[drrr]^(0.4){i} \\
{} \ar@{-}@<0.2ex>[rr] \ar@{.}@<-0.2ex>[rr]
&& {\cdot_{q'}} \ar@{-}@/^3pc/[rrrrrr]^j
\ar@{-}@<0.2ex>[rrrrrr] \ar@{.}@<-0.2ex>[rrrrrr] \ar@{-}[urrr]^k
&&&&&& {\cdot_q} \ar@{-}@<0.2ex>[rr] \ar@{.}@<-0.2ex>[rr] && {}
}
\end{array}
& \longrightarrow &
\begin{array}{c}
\xymatrix@=0.5pc{
\\
&&&&& {\cdot_{p_1}} \ar@{-}@<0.2ex>[drrr] \ar@{.}@<-0.2ex>[drrr]
\ar@{.}@<0.2ex>[dr] \ar@{-}@<-0.2ex>[dr] \\
{} \ar@{-}@<0.2ex>[rr] \ar@{.}@<-0.2ex>[rr]
&& {\cdot_{q'}} \ar@{-}@/^3pc/[rrrrrr]^j
\ar@{-}@<0.2ex>[rrrr] \ar@{.}@<-0.2ex>[rrrr] \ar@{-}[urrr]^k
&&&& {\cdot_{p_2}}
&& {\cdot_q} \ar@{-}@<0.2ex>[rr] \ar@{.}@<-0.2ex>[rr] && {}
}
\end{array}
\\
(1a) & & (1b) \\ \\
\begin{array}{c}
\xymatrix@R=1pc@C=0.5pc{
{} \ar@{.}@<0.2ex>[rr] \ar@{-}@<-0.2ex>[rr] 
&& {\cdot_q} \ar@{.}@<0.2ex>[rrrr] \ar@{-}@<-0.2ex>[rrrr]
\ar@{-}[dd]_j \ar@{-}[ddrrrr]^i
&&&& {\cdot_{q'}} \ar@{.}@<0.2ex>[rr] \ar@{-}@<-0.2ex>[rr]
\ar@{-}[dd]^k
&& {} \\ \\
{} \ar@{-}@<0.2ex>[rr] \ar@{.}@<-0.2ex>[rr] 
&& {\cdot_{p'}} \ar@{-}@<0.2ex>[rrrr] \ar@{.}@<-0.2ex>[rrrr]
&&&& {\cdot_p} \ar@{-}@<0.2ex>[rr] \ar@{.}@<-0.2ex>[rr]
&& {}
}
\end{array}
& \longrightarrow &
\begin{array}{c}
\xymatrix@R=1pc@C=0.5pc{
{} \ar@{.}@<0.2ex>[rr] \ar@{-}@<-0.2ex>[rr] 
&& {\cdot_{q_1}} \ar@{.}@<0.2ex>[ddrrr] \ar@{-}@<-0.2ex>[ddrrr]
\ar@{-}[dd]_j
& {\cdot_{q_2}} \ar@{-}@<0.2ex>[ddrrr] \ar@{.}@<-0.2ex>[ddrrr]
\ar@{.}@<0.2ex>[rrr] \ar@{-}@<-0.2ex>[rrr]
&&& {\cdot_{q'}} \ar@{.}@<0.2ex>[rr] \ar@{-}@<-0.2ex>[rr]
\ar@{-}[dd]^k
&& {} \\ \\
{} \ar@{-}@<0.2ex>[rr] \ar@{.}@<-0.2ex>[rr] 
&& {\cdot_{p'}} \ar@{-}@<0.2ex>[rrr] \ar@{.}@<-0.2ex>[rrr]
&&& {\cdot_{p_1}}
& {\cdot_{p_2}} \ar@{-}@<0.2ex>[rr] \ar@{.}@<-0.2ex>[rr]
&& {}
}
\end{array}
\\
(2a) && (2b)
\end{array}
\]
\caption{Reduction by cutting along the arc $i$
at a puncture $p$ (top row) and at two distinct boundary points
$p,q$ (bottom row).}
\label{fig:reduce}
\end{figure}
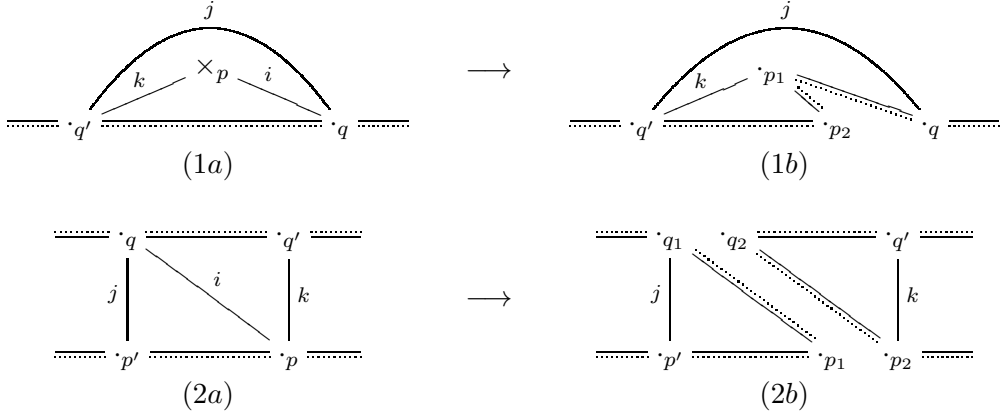

\begin{lemma} \label{l:SinP}
An adjacency quiver of a marked surface in $\cS$ belongs to the class $\cP$.
\end{lemma}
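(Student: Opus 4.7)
The plan is to reduce this to a single seed per surface: since $\cP$ is by construction closed under mutation, and the adjacency quivers of all triangulations of a given marked surface form a single mutation class (by the compatibility of flips and mutations~\cite{FST08}), it suffices for each $(S,M) \in \cS$ to exhibit \emph{one} triangulation whose adjacency quiver lies in $\cP$. This will be carried out by induction on a complexity measure of $(S,M)$, for instance the number of arcs $n$ in any triangulation of $(S,M)$.

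The base case is when $(S,M)$ is an unpunctured disc, in which case any triangulation produces an adjacency quiver of type $A_n$. Such a quiver is acyclic, and any acyclic quiver is obtained from the one-vertex quiver $\bullet$ by a sequence of triangular extensions (add the vertices one at a time according to a topological sort), so it lies in $\cP$.

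For the inductive step, suppose $(S,M) \in \cS$ is not an unpunctured disc. By Lemma~\ref{l:cut}, there is a triangulation of $(S,M)$ whose adjacency quiver $Q$ has a sink $i$ such that $Q \setminus \{i\}$ is the adjacency quiver of a triangulation of some $(S',M') \in \cS$ which is strictly simpler. The induction hypothesis gives $Q \setminus \{i\} \in \cP$, and then Lemma~\ref{l:onepoint} yields $Q \in \cP$, finishing the step.

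The main obstacle is to confirm that each of the three reductions in the proof of Lemma~\ref{l:cut} actually decreases the chosen complexity measure while keeping us in $\cS$. Concretely, in case~(1) a puncture is replaced by two boundary marked points on the same boundary component; in case~(2) two boundary components are merged into one (with two new boundary marked points); and in case~(3) the genus drops by one via a non-separating cut with distinct boundary endpoints. In each case, the arc-count formula $n = 6g + 3b + 3p + |M \cap \partial S| - 6$ strictly decreases, and the resulting surface still has at least two marked points and nonempty boundary, so the induction is well-founded and terminates at the base case handled above.
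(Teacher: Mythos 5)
Your proof is correct and follows essentially the same route as the paper: the base case is the unpunctured disc (type $A_n$), and the inductive step on the number of arcs combines Lemma~\ref{l:cut} with Lemma~\ref{l:onepoint}, using mutation-closedness of $\cP$ to reduce to a single triangulation. The extra bookkeeping you supply (the arc-count formula and the check that each reduction in Lemma~\ref{l:cut} decreases it) is left implicit in the paper but is accurate.
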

\begin{proof}
If $(S,M)$ is an unpunctured disc, an
adjacency quiver is mutation equivalent to
a Dynkin quiver of type $A_n$, hence belongs to $\cP$.
Otherwise, we proceed by induction on the number of arcs
in a triangulation of $(S,M)$, the induction step being an
application of Lemma~\ref{l:cut} and Lemma~\ref{l:onepoint}.
\end{proof}

In order to complete the proof of the ``if'' part of Theorem~\ref{t:Pfinite},
it remains to consider surfaces of genus $g \geq 1$ with exactly
one boundary component and one marked point on that component.
We denote the mutation class of the corresponding adjacency quivers
by $\cQ_{g,1}$, in agreement with the notation in our
papers~\cite{Ladkani11a,Ladkani11}.

\begin{lemma} \label{l:PQg1}
A quiver in $\cQ_{g,1}$ belongs to the class $\cP$ if and only if $g>1$.
\end{lemma}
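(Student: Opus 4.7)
The plan is to treat the two directions of the equivalence separately.

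For $g = 1$: The goal is to show $Q_{1,1} \notin \cP$, which would follow by contradiction from Theorem~\ref{t:P}. If $Q_{1,1}$ belonged to $\cP$, its non-degenerate potential would be unique up to right equivalence, contradicting the construction in \cite{GLS13} of two non-right-equivalent non-degenerate potentials on $Q_{1,1}$. Closure of $\cP$ under mutation then excludes every member of the mutation class $\cQ_{1,1}$.

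For $g \geq 2$: The goal is $\cQ_{g,1} \subseteq \cP$. The strategy is to reduce to the situation of Lemma~\ref{l:SinP} by adjoining an extra marked point. Concretely, I would enlarge $(S_{g,1}, \{p\})$ to a marked surface $(S_{g,2}, \{p,p_2\}) \in \cS$ by placing an auxiliary marked point $p_2$ on the boundary, and choose a triangulation $T'$ of the enlarged surface whose two boundary triangles share a common interior arc $i$ forming a quadrilateral. With a careful choice of the counterclockwise orderings of sides in both triangles, $i$ becomes a sink in the non-extended adjacency quiver $Q_{T'}$. The availability of two distinct boundary segments in $(S_{g,2},\{p,p_2\})$ is essential here: in $(S_{g,1},\{p\})$, which has only one boundary segment, no interior arc of any triangulation can be a sink in the adjacency quiver, and this is precisely why we pass to the larger surface. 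Lemma~\ref{l:SinP} then gives $Q_{T'} \in \cP$, and Lemma~\ref{l:onepoint} yields $Q_{T'} \setminus \{i\} \in \cP$. To finish, I would argue that $Q_{T'} \setminus \{i\}$ coincides, possibly after a further mutation, with the adjacency quiver of a triangulation of $(S_{g,1}, \{p\})$; mutation-closure of $\cP$ then propagates membership to all of $\cQ_{g,1}$.

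The main obstacle will be this concluding matching step of the $g \geq 2$ case: the combinatorial operation of removing vertex $i$ from $Q_{T'}$ corresponds topologically to collapsing $p_2$ onto $p$ and re-interpreting the reduced triangulation on $(S_{g,1},\{p\})$, but this collapse produces a new boundary triangle whose contribution to the adjacency quiver differs from $Q_{T'} \setminus \{i\}$ by a single arrow between the arcs bordering the collapsed quadrilateral. A careful initial choice of $T'$ together with a suitable mutation in the reduced quiver should bridge this discrepancy, and establishing this bridge is where the bulk of the technical work lies.
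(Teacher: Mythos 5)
Your $g=1$ direction is fine, though it takes a different and heavier route than the paper: the paper simply notes that the mutation class of $Q_{1,1}$ is a singleton and that $Q_{1,1}$ is not a triangular extension of two smaller quivers, so none of the operations generating $\cP$ can ever produce it; your argument via Theorem~\ref{t:P} and the two non-right-equivalent non-degenerate potentials of~\cite{GLS13} is valid but imports a substantial external result where an elementary combinatorial observation suffices.

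The $g\geq 2$ direction contains a genuine gap: you apply Lemma~\ref{l:onepoint} backwards. That lemma states that if $i$ is a sink of $Q$ and $Q\setminus\{i\}$ lies in $\cP$, then $Q$ lies in $\cP$; it does \emph{not} allow you to pass from $Q_{T'}\in\cP$ to $Q_{T'}\setminus\{i\}\in\cP$. Since $\cP=\langle\bullet\rangle$ is the \emph{smallest} class closed under mutation and triangular extension, membership propagates from the pieces of a triangular extension to the whole and never conversely, and $\cP$ is not known to be closed under deleting vertices (even sinks). So the key deduction in your reduction is unsupported, and the subsequent ``matching step'' you flag as the main technical work would not rescue the argument even if carried out. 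Worse, the one-point strategy is structurally unavailable here: as the paper observes in a later remark, a counting argument shows that no quiver in $\cQ_{g,1}$ has a sink or a source, so no member of $\cQ_{g,1}$ can be reached as a one-point extension or co-extension at all. The paper instead builds \emph{up}: it glues a genus $g-1$ surface with one boundary component and one marked point to a one-holed once-marked torus along their boundaries, so that a suitable triangulation of the resulting genus $g$ surface with one boundary component and one marked point has adjacency quiver equal to the disjoint union of the two \emph{extended} adjacency quivers $\wt{Q}_{T'}$ and $\wt{Q}_{T''}$ together with a single arrow $\gamma'\to\gamma''$. Each of these extended quivers is an ordinary adjacency quiver of a surface in $\cS$ (one boundary component, two marked points), hence lies in $\cP$ by Lemma~\ref{l:SinP}; closure under triangular extension and mutation then gives all of $\cQ_{g,1}$. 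To repair your proof you would need to reorganize it so that the quiver of $(S_{g,1},\{p\})$ appears as the extension of smaller quivers in $\cP$, not as a restriction of a larger one.
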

\begin{proof}
The quiver $Q_{1,1}$ is not a triangular extension and its mutation
class consists of a single element, hence it is not in $\cP$.
From now on assume $g>1$.

Let $(S',\{p'\})$ be a surface of genus $g-1$ with one boundary component
$\gamma'$ and a marked point $p'$ on $\gamma'$. Let $(S'',\{p''\})$ be a
torus with one boundary component $\gamma''$ and a marked point $p''$
on $\gamma''$.
Gluing these surfaces along $\gamma'$ and $\gamma''$, identifying
the marked point $p'$ with $p''$, we get a closed surface $(S,\{p\})$ of
genus $g$ with one puncture $p$ which is the image of $p'$ (and $p''$).

Two triangulations $T'$ of $(S',\{p'\})$ and $T''$ of $(S'',\{p''\})$
yield a triangulation $T$ of $(S,\{p\})$ by taking the arcs of $T'$
and $T''$ together with the arc $\gamma$ which is the image in $S$
of $\gamma'$ (or $\gamma''$). Hence the adjacency quiver $Q_T$ is
obtained from the disjoint union of the
\emph{extended} adjacency quivers (cf. Section~\ref{sec:UA}) 
$Q' = \wt{Q}_{T'}$ and $Q'' = \wt{Q}_{T''}$ by identifying the two
frozen vertices corresponding to the boundary segments $\gamma_1$
in $Q'$ and $\gamma_2$ in $Q''$.

The arc $\gamma$ is contained in two triangles of $T$ as in
Figure~\ref{fig:Qg0toQg1}, one arising from $S'$ and the other from
$S''$. By cutting out a disc containing $p$
within one of the triangles and adding an arc ``parallel'' to $\gamma$
we get a triangulation of a surface of genus $g$ with one boundary
component and one marked point. Denoting the parallel arcs by $\gamma'$
and $\gamma''$, the adjacency quiver $Q$ of this triangulation is
obtained from the disjoint union of $Q'$ and $Q''$ by adding a single
arrow $\gamma' \to \gamma''$, hence it is a triangular extension
of $Q'$ and~$Q''$.

The argument in the proof of Lemma~\ref{l:flipext} shows that
the extended adjacency quivers $Q'$ and $Q''$ are (usual)
adjacency quivers for surfaces with two marked points, hence
by Lemma~\ref{l:SinP} they belong to $\cP$.
It follows that $Q \in \cP$ as well.
\end{proof}

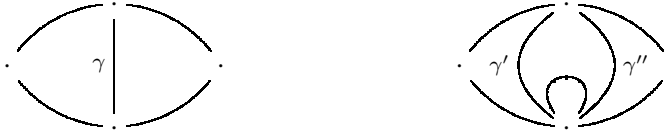
\begin{figure}
\begin{align*}
\xymatrix@=1pc{
&& {\cdot} \ar@{-}@/^/[drr] \\
{\cdot} \ar@{-}@/^/[urr] && && {\cdot} \ar@{-}@/^/[dll] \\
&& {\cdot} \ar@{-}@/^/[ull] \ar@{-}[uu]^{\gamma}
}
&&
\xymatrix@=1pc{
&& {\cdot} \ar@{-}@/^/[drr] \\
{\cdot} \ar@{-}@/^/[urr] && && {\cdot} \ar@{-}@/^/[dll] \\
&& {\cdot} \ar@{-}@/^/[ull]
\ar@{-}@/^1.5pc/[uu]^{\gamma'} \ar@{-}@/_1.5pc/[uu]_{\gamma''}
\ar@{-}@(ur,ul)[]
\ar@{.}@<0.2ex>@(ur,ul)[]
}
\end{align*}
\caption{Inserting a boundary component.}
\label{fig:Qg0toQg1}
\end{figure}

We complete the proof of the ``only if'' part of Theorem~\ref{t:Pfinite}
by considering closed surfaces and applying an algebraic argument generalizing
Corollary~\ref{c:Qg0noP} to arbitrary number of punctures.

\begin{lemma}
An adjacency quiver of a triangulation of a closed surface does not belong
to the class $\cP$.
\end{lemma}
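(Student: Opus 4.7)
The plan is to apply Theorem~\ref{t:P} in contrapositive form: I will exhibit on $Q$ a non-degenerate potential whose Jacobian algebra is infinite-dimensional, thereby violating property~\eqref{it:P:fd} and forcing $Q \notin \cP$.

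Let $T$ be a triangulation whose adjacency quiver is $Q$ and let $W_L$ be the Labardini potential of~\cite{Labardini09} associated to $T$. By the author's earlier work~\cite{Ladkani12}, $W_L$ is non-degenerate and its Jacobian algebra is finite-dimensional, and it decomposes as $W_L = W_\triangle + \sum_{p \in M} c_p W_p$, where $W_\triangle$ collects the oriented $3$-cycles coming from the triangles of $T$, $W_p$ is the cyclic path around the puncture $p$, and $c_p$ are nonzero scalars. I then consider the truncated potential $W_0 = W_\triangle$ obtained by omitting all the puncture cycles, which is the natural generalization of the potential $W_0$ used for the once-punctured case in Proposition~\ref{p:W0W1}.

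The two key assertions to check for $W_0$ are (a) non-degeneracy and (b) infinite-dimensionality of the Jacobian algebra. For (a), I would adapt the argument proving Proposition~\ref{p:W0W1} to the multi-puncture setting, invoking the compatibility between flips of triangulations without self-folded triangles and DWZ mutations of quivers with potentials established by Labardini and refined in~\cite{Ladkani12}. For (b), observe that for any fixed puncture $p$ the cyclic path $W_p$ is not in the Jacobian ideal: the relations $\partial_\alpha W_0 = 0$ involve only arrows belonging to the triangle cycles and cannot collapse a puncture cycle, so the powers of $W_p$ constitute a linearly independent family in $\cP(Q, W_0)$. Once (a) and (b) are in hand, $W_0$ witnesses the failure of property~\eqref{it:P:fd} of Theorem~\ref{t:P}, forcing $Q \notin \cP$.

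The principal technical obstacle is verifying (a) in the presence of several punctures, which requires arguing that omitting all the puncture cycles does not produce $2$-cycles at any stage of an arbitrary DWZ mutation sequence. This amounts to checking that already the triangle cycles in $W_\triangle$ account for the compatibility with flips at every arc of every triangulation reachable from $T$; it is the reason the argument must be grounded in the explicit combinatorial framework developed in~\cite{Ladkani12}.
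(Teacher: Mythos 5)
Your overall strategy --- exhibit a non-degenerate potential on $Q$ violating one of the properties in Theorem~\ref{t:P} --- is the right one, and it is the route the paper itself takes; but the specific witness you choose creates gaps that the paper deliberately avoids. The paper does \emph{not} use the truncated potential $W_\triangle$ in the general closed case: it uses the \emph{full} Labardini potential, quotes \cite{Labardini12} for its non-degeneracy (which is known only when the surface is not a sphere with $4$ or $5$ punctures), and quotes \cite{Ladkani12} for its \emph{non-rigidity}, so the contradiction is with property~\eqref{it:P:rigid} rather than with~\eqref{it:P:fd}. This sidesteps exactly the two points where your argument is unsupported. First, your claim~(a), non-degeneracy of $W_\triangle$, is genuinely hard for closed surfaces with several punctures: unlike the once-punctured case underlying Proposition~\ref{p:W0W1}, triangulations of a closed surface with $\geq 2$ punctures can contain self-folded triangles, the flip/QP-mutation compatibility of \cite{Labardini09} is then formulated for the full potential (with all puncture cycles and suitable coefficients), and there is no result in \cite{Ladkani12} asserting that the restriction to the triangle terms stays $2$-acyclic under arbitrary mutation sequences. ``Adapting the argument'' is not a proof here; indeed even the non-degeneracy of the full potential was open for the sphere with $4$ or $5$ punctures, cases your proposal never addresses (the paper handles them by a separate combinatorial argument: all quivers in those two mutation classes are strongly connected, hence never triangular extensions, hence not in $\cP$).

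Second, your claim~(b) is also only a heuristic: the cyclic derivatives of $W_\triangle$ are length-two paths involving \emph{every} arrow of $Q$ (each arrow lies in a triangle cycle), so the assertion that they ``cannot collapse a puncture cycle'' requires an actual computation in the quotient; ``the powers of $W_p$ constitute a linearly independent family'' is the desired conclusion, not an argument. Finally, the attribution is off: \cite{Ladkani12} establishes finite-dimensionality and non-rigidity of the Labardini potential, not its non-degeneracy, which is due to \cite{Labardini12}. To repair your proof you would either have to prove the multi-puncture analogue of Proposition~\ref{p:W0W1} from scratch, or switch, as the paper does, to the rigidity criterion~\eqref{it:P:rigid} applied to the full Labardini potential, supplemented by the combinatorial treatment of the two exceptional spheres.
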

\begin{proof}
If the surface is not a sphere with $4$ or $5$ punctures, 
the potential associated to the triangulation by Labardini~\cite{Labardini09}
has recently been shown by him to be non-degenerate~\cite{Labardini12}.
However, by~\cite{Ladkani12} it is not rigid
and the claim follows from Theorem~\ref{t:P}.

The mutation classes of adjacency quivers of triangulations of
a sphere with $4$ or $5$ punctures consist of $4$ and $26$ quivers,
respectively. 
One checks on a computer that for each of these quivers,
any two vertices $i$ and $j$ are connected by a path from $i$ to $j$
as well as a path from $j$ to $i$.
Therefore these quivers cannot be triangular
extensions.
Since any quiver in the mutation class is not a triangular extension,
we get the claim.
\end{proof}

We conclude with a few remarks.
\begin{remark}
By combining Lemma~\ref{l:PQg1} with~\cite[Theorem~10.5]{Muller13} we see
that there are quivers in class $\cP$ whose cluster algebra
is not locally acyclic as defined in~\cite{Muller13}.
Hence the two notions do not coincide.
\end{remark}

\begin{remark}
A counting argument shows that each quiver in $\cQ_{g,1}$ does not have
any sinks or sources.
Indeed, the number of arrows is one less than twice the number of
vertices and the in-degree and out-degree of any vertex are bounded by $2$.
From Lemma~\ref{l:PQg1} we deduce that there are quivers in $\cP$ that are
not mutation equivalent to quivers with a sink or a source.

This could be made more formal as follows.
Consider the class $\cP'$ of quivers defined similarly to the class $\cP$,
except that we only allow triangular extensions of two quivers where
one of them is a point (in analogy with one-point extensions and
co-extensions). By construction, any quiver in $\cP'$ is mutation
equivalent to a quiver with a sink or a source.
We therefore get a sequence of strict inclusions
\[
\left\{\text{quivers that are mutation-equivalent to acyclic ones} \right\}
\subsetneqq \cP' \subsetneqq \cP .
\]
The proof of Theorem~\ref{t:Pfinite} shows that a connected quiver whose
mutation class is finite belongs to the class $\cP'$ if and only if
it belongs to $\cP$ and is not a member of a class $\cQ_{g,1}$ for some
$g>1$.
\end{remark}

\section{Explicit construction of some quivers}
\label{sec:constr}

In order to make our results more concrete, we present explicit
constructions of quivers appearing in the main theorem.
Such constructions were already presented in our previous
work~\cite[\S3.2]{Ladkani11} yielding quivers having some
block structure but with double arrows.
Here we present another procedure yielding quivers without double arrows.

Recall that a closed surface of genus $g \geq 1$ can be obtained by taking
the fundamental polygon with $4g$ sides labeled $1, 2, 1, 2, \dots,
2g-1, 2g, 2g-1, 2g$ and identifying sides having the same label (with
appropriate orientations that will not be relevant here).
Under this identification, all the vertices of the polygon are being mapped
to the same point.
Thus, any triangulation of this $4g$-gon gives rise to a triangulation of
a once-punctured closed surface of genus $g$ by identifying the puncture
with that common point and adding the $2g$ arcs corresponding to the
distinct sides of the $4g$-gon.

From now on assume that $g \geq 2$. First we add $2g$ new arcs labeled
$2g+1,2g+2,\dots,4g$ such that each arc $2g+i$ is the side of a triangle
as shown below:
\begin{align} \label{e:triang}
\begin{array}{c}
\xymatrix@=1pc{
{\cdot} \ar@{-}[dd]_{2g+i} \ar@{-}[drr]^{i+1} \\
&& {\cdot} \ar@{-}[dll]^i \\
{\cdot}
}
\end{array}
\text{($1 \leq i < 2g$ odd)}
&&
\begin{array}{c}
\xymatrix@=1pc{
{\cdot} \ar@{-}[dd]_{2g+i} \ar@{-}[drr]^i \\
&& {\cdot} \ar@{-}[dll]^{i+1} \\
{\cdot}
}
\end{array}
\text{($1 < i \leq 2g$ even)}
\end{align}
These new arcs encircle a $2g$-gon inside the fundamental $4g$-gon,
and any triangulation of this inner $2g$-gon (consisting of additional
$2g-3$ arcs) yields a triangulation of the surface with $6g-3$ arcs.
Its adjacency quiver will not contain double arrows, since
by our choice of triangles in~\eqref{e:triang}, for any two arcs there is
at most one triangle having both of them as sides.

In particular, we can choose a triangulation of the inner $2g$-gon whose
adjacency quiver is a linearly oriented Dynkin quiver $A_{2g-3}$ and
get a triangulation of the once-punctured surface of genus $g \geq 2$ whose
adjacency quiver has
$6g-3$ vertices numbered $1,2,\dots,6g-3$ with the arrows
\begin{align*}
i &\to 2g+i && i \to 2g+(i-1) && 2g+i \to i+1
&& \text{($1 \leq i < 2g$ odd)} \\
i &\to i-1 && i \to i+1 && 2g+i \to i
&& \text{($1 < i \leq 2g$ even)}
\end{align*}
(here $i-1$ and $i+1$ are computed ``modulo $2g$'' to take values
in the range $[1,2g]$, i.e.\ if $i=1$ then $i-1$ is $2g$ and
if $i=2g$ then $i+1$ is $1$)
together with the arrows
\begin{align*}
2g+2 &\to 4g+1 && 4g+1 \to 2g+3 && 4g+1 \to 2g+1 \\
2g+i+1 &\to 4g+i && 4g+i \to 2g+i+2  && 4g+i \to 4g+i-1
&& (2 \leq i \leq 2g-3) \\
4g-1 &\to 4g && 2g+1 \to 2g+2 && 4g \to 6g-3
\end{align*}
corresponding to the chosen triangulation of the inner $2g$-gon.

Examples of these triangulations for surfaces of genus $2$ and $3$
are shown in Figure~\ref{fig:Tg0}. The corresponding adjacency
quivers are those appearing in Figure~\ref{fig:Qg0}.

\begin{figure}
\begin{align*}
\begin{array}{c}
\xymatrix@=0.3pc{
& && {\cdot}
\ar@{-}[drr]^1 \\
& {\cdot} \ar@{-}[urr]^4 \ar@{-}@/_/[rrrr]^8
&& && {\cdot} \ar@{-}[ddr]^2
\ar@{-}@/_/[dddd]^5 \ar@{-}[ddddllll]^9 \\ \\
{\cdot} \ar@{-}[uur]^3 && && && {\cdot} \ar@{-}[ddl]^1 \\ \\
& {\cdot} \ar@{-}[uul]^4  \ar@{-}@/_/[uuuu]^7
&& && {\cdot} \ar@{-}[dll]^2 \ar@{-}@/_/[llll]^6 \\
& && {\cdot} \ar@{-}[ull]^3
}
\end{array}
& &
\begin{array}{c}
\xymatrix@=0.2pc{
& && && {\cdot} \\
& && {\cdot} \ar@{-}[urr]^6 \ar@{-}@/_/[rrrr]^{12}
&& && {\cdot} \ar@{-}[ull]_1 \ar@{-}@/_/[ddddrrr]_7 
\ar@{-}@/_/[dddddddd]^{13} \ar@{-}[ddddddddllll]_(0.6){14}
\ar@{-}@/^/[ddddlllllll]^{15} \\ \\
& {\cdot} \ar@{-}[uurr]^5 && && && && {\cdot} \ar@{-}[uull]_2 \\ \\
{\cdot} \ar@{-}[uur]^6 \ar@{-}@/_/[uuuurrr]_(0.7){11}
& && && && && & {\cdot} \ar@{-}[uul]_1
\ar@{-}@/_/[ddddlll]_8 \\ \\
& {\cdot} \ar@{-}[uul]^5 && && && && {\cdot} \ar@{-}[uur]_2 \\ \\
& && {\cdot} \ar@{-}[uull]^4 \ar@{-}@/_/[uuuulll]_{10}
&& && {\cdot} \ar@{-}[uurr]_3 \ar@{-}@/_/[llll]_9 \\
& && && {\cdot} \ar@{-}[ull]^3 \ar@{-}[urr]_4
}
\end{array}
\end{align*}
\caption{Triangulations of closed surfaces of genus $g$ with one
puncture, for $g=2,3$. Arcs having the same label are identified.}
\label{fig:Tg0}
\end{figure}

\bibliographystyle{amsplain}
\bibliography{punct}

\providecommand{\bysame}{\leavevmode\hbox to3em{\hrulefill}\thinspace}
\providecommand{\MR}{\relax\ifhmode\unskip\space\fi MR }
\providecommand{\MRhref}[2]{%
  \href{http://www.ams.org/mathscinet-getitem?mr=#1}{#2}
}
\providecommand{\href}[2]{#2}
\begin{thebibliography}{10}

\bibitem{Amiot09}
Claire Amiot, \emph{Cluster categories for algebras of global dimension 2 and
  quivers with potential}, Ann. Inst. Fourier (Grenoble) \textbf{59} (2009),
  no.~6, 2525--2590.

\bibitem{BacherVdovina02}
Roland Bacher and Alina Vdovina, \emph{Counting 1-vertex triangulations of
  oriented surfaces}, Discrete Math. \textbf{246} (2002), no.~1-3, 13--27,
  Formal power series and algebraic combinatorics (Barcelona, 1999).

\bibitem{BFZ05}
Arkady Berenstein, Sergey Fomin, and Andrei Zelevinsky, \emph{Cluster algebras.
  {III}. {U}pper bounds and double {B}ruhat cells}, Duke Math. J. \textbf{126}
  (2005), no.~1, 1--52.

\bibitem{BerensteinRetakh}
Arkady Berenstein and Vladimir Retakh, \emph{Noncommutative clusters}, 2013.

\bibitem{BDP13}
Thomas Br\"{u}stle, Gr\'{e}goire Dupont, and Matthieu P\'{e}rotin, \emph{On
  maximal green sequences}, Int. Math. Res. Not. (2013).

\bibitem{DehyKeller08}
Raika Dehy and Bernhard Keller, \emph{On the combinatorics of rigid objects in
  2-{C}alabi-{Y}au categories}, Int. Math. Res. Not. (2008), no.~11, Art. ID
  rnn029, 17.

\bibitem{DerksenOwen08}
Harm Derksen and Theodore Owen, \emph{New graphs of finite mutation type},
  Electron. J. Combin. \textbf{15} (2008), no.~1, Research Paper 139, 15.

\bibitem{DWZ08}
Harm Derksen, Jerzy Weyman, and Andrei Zelevinsky, \emph{Quivers with
  potentials and their representations. {I}. {M}utations}, Selecta Math. (N.S.)
  \textbf{14} (2008), no.~1, 59--119.

\bibitem{FST12}
Anna Felikson, Michael Shapiro, and Pavel Tumarkin, \emph{Skew-symmetric
  cluster algebras of finite mutation type}, J. Eur. Math. Soc. (JEMS)
  \textbf{14} (2012), no.~4, 1135--1180.

\bibitem{FST08}
Sergey Fomin, Michael Shapiro, and Dylan Thurston, \emph{Cluster algebras and
  triangulated surfaces. {I}. {C}luster complexes}, Acta Math. \textbf{201}
  (2008), no.~1, 83--146.

\bibitem{FominZelevinsky02}
Sergey Fomin and Andrei Zelevinsky, \emph{Cluster algebras. {I}.
  {F}oundations}, J. Amer. Math. Soc. \textbf{15} (2002), no.~2, 497--529
  (electronic).

\bibitem{FominZelevinsky07}
\bysame, \emph{Cluster algebras. {IV}. {C}oefficients}, Compos. Math.
  \textbf{143} (2007), no.~1, 112--164.

\bibitem{GLS13}
Christof Gei{\ss}, Daniel Labardini-Fragoso, and Jan Schr\"{o}er, \emph{The
  representation type of {J}acobian algebras}, \texttt{arXiv:1308.0478}.

\bibitem{Keller11}
Bernhard Keller, \emph{On cluster theory and quantum dilogarithm identities},
  Representations of algebras and related topics, EMS Ser. Congr. Rep., Eur.
  Math. Soc., Z\"urich, 2011, pp.~85--116.

\bibitem{Keller12}
\bysame, \emph{Cluster algebras and derived categories}, Derived categories in
  algebraic geometry, EMS Ser. Congr. Rep., Eur. Math. Soc., Z\"urich, 2012,
  pp.~123--183.

\bibitem{Keller13}
\bysame, \emph{Quiver mutation and combinatorial {DT}-invariants}, 25th
  {I}nternational {C}onference on {F}ormal {P}ower {S}eries and {A}lgebraic
  {C}ombinatorics ({FPSAC} 2013), Discrete Math. Theor. Comput. Sci. Proc., AS,
  Assoc. Discrete Math. Theor. Comput. Sci., Nancy, 2013.

\bibitem{KellerYang11}
Bernhard Keller and Dong Yang, \emph{Derived equivalences from mutations of
  quivers with potential}, Adv. Math. \textbf{226} (2011), no.~3, 2118--2168.

\bibitem{KS08}
Maxim Kontsevich and Yan Soibelman, \emph{Stability structures, motivic
  {D}onaldson-{T}homas invariants and cluster transformations},
  \texttt{arXiv:0811.2435}.

\bibitem{Labardini12}
Daniel Labardini-Fragoso, \emph{Quivers with potentials associated to
  triangulated surfaces, part {IV}: {R}emoving boundary assumptions},
  \texttt{arXiv:1206.1798}.

\bibitem{Labardini09}
\bysame, \emph{Quivers with potentials associated to triangulated surfaces},
  Proc. Lond. Math. Soc. (3) \textbf{98} (2009), no.~3, 797--839.

\bibitem{Ladkani11a}
Sefi Ladkani, \emph{Mutation classes of certain quivers with potentials as
  derived equivalence classes}, \texttt{arXiv:1102.4108}.

\bibitem{Ladkani12}
\bysame, \emph{On {J}acobian algebras from closed surfaces},
  \texttt{arXiv:1207.3778}.

\bibitem{Ladkani11}
\bysame, \emph{Which mutation classes of quivers have constant number of
  arrows?}, \texttt{arXiv:1104.0436}.

\bibitem{Muller13}
Greg Muller, \emph{Locally acyclic cluster algebras}, Adv. Math. \textbf{233}
  (2013), 207--247.

\bibitem{Plamondon13}
Pierre-Guy Plamondon, \emph{Generic bases for cluster algebras from the cluster
  category}, Int. Math. Res. Not. (2013), no.~10, 2368--2420.

\end{thebibliography}

\end{document}